\newtheorem{theorem}{Theorem}[section]
\newtheorem{lemma}[theorem]{Lemma}
\newtheorem{p}{Proposition}[section]
\newtheorem{corollary}{Corollary}[section]
\theoremstyle{definition}
\newtheorem{definition}[theorem]{Definition}
\newtheorem{example}[theorem]{Example}
\theoremstyle{remark}
\newtheorem{remark}[theorem]{Remark}
\numberwithin{equation}{section}
\newcommand{\blankbox}[2]
\begin{document}
	\title{ A class of uniformly bounded simple $\mathbb{Z}$-graded Lie conformal algebras}
	
	\author{Maosen Xu}
	\address{School of Mathematics Information, Shaoxing University, Shaoxing, 312000, P.R.China} \email{390596169@qq.com}

	\keywords{$\mathbb{Z}$-graded simple Lie conformal algebra, quadratic Lie conformal algebra, Gel'fand-Dorfman algebra}
	
	\begin{abstract}In this paper, we classify the  following simple  $\mathbb{Z}$-graded Lie conformal algebras $\mathcal{L}=\bigoplus_{i\in \mathbb{Z}}\mathcal{L}_i$ such that (1)$rank\mathcal{L}_i\leq 1$, (2)$\mathcal{L}_0$ is the Virasoro Lie conformal algebra.
	\end{abstract}
	\maketitle

	\section{introduction}
	The notion of Lie conformal algebra was introduced to provide an axiomatic description of
	properties of the operator product expansion in conformal field theory in \cite{K}. Many other fields are closely related to Lie conformal algebras such as vertex algebras and linearly compact Lie algebras. Furthermore, Lie conformal algebra is interpreted as the Lie algebra over a pseudo-tensor category in \cite{BDK}.
	The theory of finite Lie conformal algebras was studied systematically in \cite{DK}.   
	In particular,  finite simple Lie conformal algebras were classified in \cite{DK}, which turns out to be either the Virasoro Lie conformal algebra or the current Lie conformal algebra of a finite-dimensional simple Lie algebra (up to isomorphism).  The description of finite irreducible modules for finite simple Lie conformal algebras was given in \cite{CK}.  Besides, the structure of infinite simple Lie conformal algebra $gc_N$ and its simple subalgebras were extensively studied (see [BKL1], [BKL2], [SY]).   
	
	Compared with the finite case, the classification of infinite simple Lie conformal algebras of finite growth is a challenging problem as stated in  \cite{K1}.     Even the uniformly bounded case has not yet been completed. Fortunately, some methods still exist to obtain some simple $\mathbb{Z}$-graded Lie conformal algebras. Y. Su and X. Yue obtained the Block type Lie conformal algebra from $gc_1$ in \cite{SY}.  X. Xu established the relation between the quadratic Lie conformal algebras and Gel'fand-Dorman algebras in \cite{X}. Further, Y. Hong and Z. Wu studied the simplicity between these two kinds of algebras in \cite{HW}, from which two simple  
	$\mathbb{Z}$-graded Lie conformal algebras  $CL_1(s)$ and  $CL_2(b,s)$ can be found.

	Inspired by the structure of $CL_1(s)$, $CL_2(b,s)$, this paper aims to classify the $\mathbb{Z}$-graded simple Lie conformal algebras $\mathcal{L}=\bigoplus_{i\in \mathbb{Z}} \mathcal{L}_i$ such that :  \begin{enumerate}
		\item[(C1)] $rank\mathcal{L}_i \leq 1$; 
		\item[(C2)] $\mathcal{L}_0$ is the Virasoro Lie conformal algebra.                 
	\end{enumerate}

	The paper is organized as follows.
	
	Section 2 recalls the basic notions and propositions of Lie conformal algebras, Novikov algebras, and Gel'fand-Dorfman algebras.
	
	In Section 3,   we first obtain some properties of the structure constants of $\mathcal{L}$. Secondly, we characterize the structure of subalgebras of $\mathcal{L}$ generated by $\mathcal{L}_{k}$ and $\mathcal{L}_{-k}$ when $[{\mathcal{L}_k}_\lambda \mathcal{L}_{-k}]\neq 0$.  Finally,  the main classification results are given as follows.
	\begin{theorem}{\label{mt}}
		Suppose that $\mathcal{L}$ is a  $\mathbb{Z}$-graded simple Lie conformal algebra satisfying $(C1)$ and $(C2)$.  Then $\mathcal{L}$ must be isomorphic to:
		\begin{enumerate}
			\item[$\bullet$] $Vir$.
			\item[$\bullet$]  $\mathcal{V}(s)=\bigoplus_{i \in \mathbb{Z}}\mathbb{C}[\partial]L_i, s\in \mathbb{C}:  \text{for}\ \  i,j\in \mathbb{Z},\ \ [{L_i}_\lambda L_j]=(\partial+2\lambda+s(i-j))L_{i+j}$.
			\item[$\bullet$] $CL_1(s)=\bigoplus_{i \in \mathbb{Z}_{\geq -1}}\mathbb{C}[\partial]L_i,s\in \mathbb{C}:$ for $i,j \geq -1,$ \[ [{L_i}_\lambda L_j]=((i+1)\partial+(i+j+2)\lambda+s(j-i))L_{i+j}.\]
			\item[$\bullet$]  $CL_2(b,s)=\bigoplus_{i \in \mathbb{Z}}\mathbb{C}[\partial]L_i,2b\not\in \mathbb{Z},s\in \mathbb{C}: \text{for}\ \  i,j\in \mathbb{Z}$,\[[{L_i}_\lambda L_j]=((i+b)\partial+(i+j+2b)\lambda+s(i-j))L_{i+j}. \]
			
			\item[$\bullet$]   $SCL_2(b,s)=\bigoplus_{i \in \mathbb{Z}}\mathbb{C}[\partial]L_i,0\neq 2b\in \mathbb{Z}, s\in \mathbb{C}: $\[ [{L_0}_\lambda L_{-2b}]=b(\partial+\lambda+2s)L_{-2b},\]
			\[  [{L_{-2b}}_\lambda L_{-2b}]=-b(-\lambda+2s)(\partial+\lambda+2s)(\partial+2\lambda)L_{-4b},\]
			\[ [{L_i}_\lambda  L_{-2b}]=(\partial+\lambda+2s)((i+b)\partial+i\lambda+s(i+2b))L_{i-2b},\   i\neq -2b,\]
			\[[{L_i}_\lambda L_j]=((i+b)\partial+(i+j+2b)\lambda+s(i-j))L_{i+j},\  \text{if} \   i+j \neq -2b,\]
			\[ [{L_i}_\lambda L_j]=\frac{i-j}{2}L_{i+j},\  \text{if}\  i+j=-2b\  \text{and} \   i,j\neq 0. \]
		\end{enumerate}
	\end{theorem}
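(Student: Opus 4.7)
The plan is to leverage the action of the Virasoro subalgebra $\mathcal{L}_0$ on each graded piece to reduce the classification to a finite-parameter problem governed by polynomial constraints. Throughout I would write $\mathcal{L}_i=\mathbb{C}[\partial]L_i$ for $i$ in the support $I=\{i\in\mathbb{Z}:\mathcal{L}_i\neq 0\}$, which necessarily contains $0$.

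First, since each $\mathcal{L}_i$ is a rank-one conformal $Vir$-module, the Jacobi identity applied to $(L_0,L_0,L_i)$ forces
\[ [{L_0}_\lambda L_i]=(A\partial+B_i\lambda+C_i)L_i \]
with $A$ a universal constant depending only on the normalization of $L_0$. Combined with the coefficient-polynomial properties established in Section 3, applying Jacobi on $(L_0,L_i,L_j)$ for $i+j\in I$ constrains the cross-bracket $[{L_i}_\lambda L_j]=g_{i,j}(\partial,\lambda)L_{i+j}$ to bidegree at most $(1,1)$ in $(\partial,\lambda)$ in the non-resonant case, and pins down its coefficients in terms of $B_i,B_j,C_i,C_j$. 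Propagating Jacobi across indices shows that $B_i$ and $C_i$ are affine in $i$: after normalization, $B_i=\beta i+\beta'$ and $C_i=-si$ for scalars $\beta,\beta',s$.

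Next I would invoke the subalgebra characterization from Section 3: whenever $[{\mathcal{L}_k}_\lambda\mathcal{L}_{-k}]\neq 0$, the subalgebra generated by $\mathcal{L}_k$ and $\mathcal{L}_{-k}$ has an explicit presentation. Combining this with simplicity, one shows that $I$ must be $\{0\}$, $\mathbb{Z}_{\geq -1}$, or $\mathbb{Z}$, and that the scalars above are globally consistent. This yields four non-exceptional cases: $Vir$ when $I=\{0\}$; $\mathcal{V}(s)$ when $I=\mathbb{Z}$ and $\beta=0$ (so $B_i$ is constant in $i$); $CL_1(s)$ when $I=\mathbb{Z}_{\geq -1}$, corresponding to a one-sided grading; and $CL_2(b,s)$ when $I=\mathbb{Z}$, $\beta\neq 0$, and $2b\notin\mathbb{Z}$ after rescaling, the last condition being what rules out any resonance between the grading and the polynomial coefficients.

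The main obstacle is the resonant case $2b\in\mathbb{Z}_{\neq 0}$, which produces $SCL_2(b,s)$. Here the Jacobi constraint on $g_{i,-2b}$ degenerates because a leading coefficient in the generic bidegree-$(1,1)$ formula vanishes whenever the output lands in $\mathcal{L}_{-2b}$. The kernel of the resulting Jacobi obstruction is generated by the extra factor $(\partial+\lambda+2s)$, so the generic formula is deformed by this linear factor for all brackets landing in $\mathcal{L}_{-2b}$. Propagating the constraint forces $[{L_{-2b}}_\lambda L_{-2b}]$ to take the displayed cubic shape with three linear factors, while the exceptional brackets $[{L_i}_\lambda L_j]$ with $i+j=-2b$ and $i,j\neq 0$ collapse to the scalar $\tfrac{i-j}{2}$ multiple. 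I would close the argument by verifying directly that the full system satisfies the conformal Jacobi identity and admits no further deformations, so that $(b,s)$ rigidly determine the algebra. Carrying out this resonant consistency check is the most delicate step of the proof.
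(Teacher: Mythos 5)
Your outline follows the same broad strategy as the paper (use the $Vir$-action on each graded piece, constrain the coefficient polynomials, invoke the subalgebra results of Section 3, and treat the resonance $2b\in\mathbb{Z}$ separately), but the two steps that carry the real weight of the theorem are asserted rather than proved, and one of them is asserted incorrectly. The Jacobi identity on $(L_0,L_i,L_j)$ does \emph{not} force $[{L_i}_\lambda L_j]$ to have total degree at most $1$, nor does it pin down its coefficients: Equation (3.1) admits nonzero solutions of degree $0,1,2,3$ (Propositions \ref{p0} and \ref{p1}), each carrying a free scalar $c_{i,j}$, and the degree is governed by $a_i+a_j-a_{i+j}-1$, which the $L_0$-action alone does not determine. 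Indeed the target algebra $SCL_2(b,s)$ genuinely contains brackets of degree $2$ and $3$, so "bidegree at most $(1,1)$ in the non-resonant case" is a conclusion of the classification, not a consequence of the $L_0$-Jacobi constraint. Likewise, "propagating Jacobi across indices shows that $B_i$ and $C_i$ are affine in $i$" is essentially the statement being proved. In the paper this requires the reduction of $\mathcal{L}[k]$ to a quadratic Lie conformal algebra, Osborn's classification of infinite-dimensional simple Novikov algebras and Xu's classification of Gel'fand--Dorfman structures (Propositions \ref{OP} and \ref{XP}), and, to exclude the exotic configuration $k\in I_0$ with $p_{-k,-k}\neq 0$, a separate argument analyzing $\bigoplus_t\mathcal{L}_{i+4tk}$ as free intermediate series modules over $\mathcal{L}[4k]$ with four cases of explicit structure-constant computations. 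None of this is replaced by anything in your sketch.

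A second concrete gap: your case split tacitly assumes $\mathcal{L}$ coincides with the subalgebra generated by $\mathcal{L}_k$ and $\mathcal{L}_{-k}$. The paper's proof of the theorem must separately handle $\mathcal{L}\neq\mathcal{L}[1]$ (i.e.\ $Supp(\mathcal{L})=\mathbb{Z}$ with $p_{-1,-1}=0$), where the subcase $1\in I_0$ is eliminated not by a rigidity computation but by exhibiting the proper graded ideal $\bigoplus_{j\neq -1}\mathbb{C}[\partial]L_j\oplus\mathbb{C}[\partial](\partial+b_{-1})L_{-1}$, contradicting simplicity. Your description of the resonant case (an extra factor $(\partial+\lambda+2s)$ deforming the generic formula) is qualitatively consistent with the paper's construction of the central extension $\mathcal{T}\cong CL_2(-1,b_{-k})$ in Case II of Proposition \ref{l2}, but as written it is a description of the answer rather than a derivation. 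To turn your proposal into a proof you would need to supply the degree analysis of Propositions \ref{p0}--\ref{p1}, the Novikov/Gel'fand--Dorfman input, and the ideal-construction arguments.
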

	
	Through this paper, denote $\mathbb{C}$ and $\mathbb{Z}_{\geq -1}$ the sets of all complex numbers and all integers greater than $-2$ respectively. $\mathbb{R}$ is the set of real numbers and $\mathbb{C}^*$ is the set of non-zero complex numbers.  $\mathbb{N}$ is the set of natural numbers. In addition, all vector spaces and tensor products are over $\mathbb{C}$. For any vector space $V$,
	we use $V[\lambda]$ to denote the set of polynomials of $\lambda$ with coefficients in $V$.  Let $f(\partial, \lambda)\in \mathbb{C}[\lambda,\partial]$. We denote the total degree of $f(\partial, \lambda)$ by $\text{deg}\   f(\partial, \lambda)$. For any complex number $a=x+yi$ where $x$, $y\in \mathbb{R}$, we denote
	$Re(a)=x$.
	
	\section{Preliminary}
	In this section, we recall some basic definitions and results of Lie conformal algebras and Gel'fand-Dorfman algebra. These facts can be found in \cite{DK} and \cite{HW}.
	
	\begin{definition}\label{d2.1}  A {\it Lie conformal algebra} $\mathcal{A}$ is a $\mathbb{C}[\partial]$-module together  with a $\mathbb{C}$-linear map (call $\lambda$-bracket)  $[\cdot_\lambda \cdot]:  \mathcal{A} \otimes \mathcal{A} \rightarrow \mathcal{A}[\lambda]$, $a\otimes b \mapsto [a_\lambda b]$, satisfying the following axioms
		\begin{equation*}
			\begin{aligned}
				&[\partial a_\lambda b]=-\lambda[a_\lambda b], \   [a_\lambda \partial b]=(\partial+\lambda)[a_\lambda b],\ \ \text{(conformal\   sequilinearity)},\\
				&[a_\lambda b]=-[b_{-\lambda-\partial}a]\ \ \text{(skew-symmetry)},\\
				&[a_\lambda [b_\mu c]]=[[a_\lambda b]_{\lambda+\mu}c]+[b_\mu[a_\lambda c]]\ \ \text{(Jacobi-identity)},
			\end{aligned}
		\end{equation*}
		for all $a$, $b$, $c\in \mathcal{A}$.
	\end{definition}
	Using conformal sequilinearity, we can define a Lie conformal algebra by giving the $\lambda$-brackets on its generators over $\mathbb{C}[\partial]$. In addition, the \emph{rank} of a Lie conformal algebra $\mathcal{A}$ is  its rank as a $\mathbb{C}[\partial]$-module, i.e.
	\[rank{\mathcal{A}}=\text{dim}_{\mathbb{C}(\partial)}(\mathbb{C}(\partial)\otimes_{\mathbb{C}[\partial]}\mathcal{A}).\]  We say a Lie conformal algebra {\it finite} if it is finitely generated as a $\mathbb{C}[\partial]$-module. 
	
	Suppose that $\mathcal{A}$ is a Lie conformal algebra.  For any $a,b \in \mathcal{A}$, we write
	\begin{eqnarray}
		\begin{array}{ll}
			[a_\lambda b]=\sum_{j\in \mathbb{Z}^+}(a_{(j)}b)\frac{\lambda^j}{j!}.
		\end{array}
	\end{eqnarray}
	For every $j\in \mathbb{Z}_+$, we have the $\mathbb{C}$-linear map: $\mathcal{A}\otimes  \mathcal{A} \rightarrow  \mathcal{A}$, $a\otimes b \mapsto a_{(j)}b$, which is called {\it $j$-th product}. If a Lie conformal algebra $\mathcal{A}$ is free as a $\mathbb{C}$-module, then there exists some vector space $V$ such that $\mathcal{A}=\mathbb{C}[\partial]\otimes V$. 
	The 0-th product  provides a Lie algebra structure over $V$, which is denoted by $L^0(\mathcal{A})$.
	\begin{example}
		The Virasoro Lie conformal algebra  $Vir=\mathbb{C}[\partial]L$ is a free  $\mathbb{C}[\partial]$-module of rank one, whose $\lambda$-brackets are determined by  $[L_\lambda L]=(\partial+2\lambda)L$. Furthermore, it is well known that $Vir$ is a simple Lie conformal algebra.
	\end{example}
	
	\begin{example}For a Lie algebra $\mathfrak{g}$, the current Lie conformal algebra $\text{Cur}\mathfrak{g}$ is a free $\mathbb{C}[\partial]$-module $\mathbb{C}[\partial] \otimes \mathfrak{g}$  equipped with  $\lambda$-brackets:
		\[ [x_\lambda y]=[x,y],\ \ \text{for}\ \  x,y \in \mathfrak{g}. \]
	\end{example}
	
	\begin{definition} For a Lie conformal algebra $\mathcal{A}$, a {\it conformal $\mathcal{A}$-module} $M$  is a $\mathbb{C}[\partial]$-module with a $\mathbb{C}$-linear map  $\mathcal{A} \otimes M \rightarrow M[\lambda]$, $a\otimes m \mapsto a_\lambda m$, satisfying the following axioms:
		\begin{equation*}
			\begin{aligned}
				&\partial a_\lambda m=-\lambda(a_\lambda m), \ \   a_\lambda \partial m=(\partial+\lambda)a_\lambda m,\\
				&a_\lambda (b_\mu m)=[a_\lambda b]_{\lambda+\mu}m+b_\mu(a_\lambda m),
			\end{aligned}
		\end{equation*}
		for all $a$, $b\in \mathcal{A}$ and $m\in M$.
	\end{definition}
	An $\mathcal{A}$-module $M$ is said to be \emph{trivial} if $a_\lambda m=0$ for any $a\in\mathcal{A}$ and $m\in M$.
	Suppose that $M$ is a finite  $\mathcal{A}$-module. Let \[ \text{Tor}M:=\{m\in M \mid f(\partial)m=0\  \text{for}\  \text{some nonzero}\  f(\partial) \in \mathbb{C}[\partial]\}.\] Then by  \cite[Lemma 8.2]{DK}, $\text{Tor}(M)$ is  trivial as an  $\mathcal{A}$-module.
	In particular, any simple Lie conformal algebra must be free as a $\mathbb{C}[\partial]$-module. 
	
	\begin{definition} We say a Lie conformal algebra $\mathcal{G}$ is \emph{$\mathbb{Z}$-graded} if $\mathcal{G}=\oplus_{i\in \mathbb{Z}}\mathcal{G}_i$, where each  $\mathcal{G}_i$ is a $\mathbb{C}[\partial]$-module and  $[{\mathcal{G}_i}_\lambda \mathcal{G}_j]\subset \mathcal{G}_{i+j}[\lambda]$ for any $i,j \in \mathbb{Z}$. In addition, we say a  $\mathbb{Z}$-graded Lie conformal algebra being   \emph{graded simple} if it has no proper graded ideals.

		Suppose that  $M$ is a $\mathcal{G}$-module. We say $M$ is  $\mathbb{Z}$-graded  if $M=\oplus_{i\in \mathbb{Z}}\mathcal{M}_i$, where each  $\mathcal{M}_i$ is a $\mathbb{C}[\partial]$-module and  ${\mathcal{G}_i}_\lambda \mathcal{M}_j\subset \mathcal{M}_{i+j}[\lambda]$ for any $i,j \in \mathbb{Z}$. Furthermore, if each $M_i$ is free of rank one as $\mathbb{C}[\partial]$-modules, then  $M$ is called as a \emph{free intermediate series module} of $\mathcal{G}$.
	\end{definition}
	
	\begin{definition}A \emph{Novikov algebra} $V$ is a vector space over $\mathbb{C}$ with a bilinear product $\circ: V \times V \to V$ satisfying
		\[ (a \circ b) \circ c-a \circ (b \circ c) = (b \circ a) \circ c-
		b \circ (a \circ c),\]
		\[ (a\circ b) \circ c = (a \circ c) \circ b,\ \ \   a,b,c\in V \] 
		A subspace $I$ is called an \emph{ideal} if $V\circ I \subset I$ and $I\circ V\subset I$. We say  $V$ is \emph{simple} if $V$ has no proper non-zero ideals. 
	\end{definition}
	\begin{p}{\label{OP}}\cite[Theorem 1.3]{O} Suppose that $A$ is an infinite dimensional simple Novikov algebra containing an element $e$ such that $e\circ e=be$ for some $b\in\mathbb{C}^*$ and $A$ is the direct sum of eigenspace of $e$. Then $A$ must be isomorphic to one of follows:
		\begin{enumerate} 
			\item $A_1=\oplus_{i\geq -1}\mathbb{C}L_i$, and for each $i,j\geq -1$,
			\[ L_i\circ L_j =(j + 1)L_{i+j}.\] 
			\item $A_2=\oplus_{\alpha \in \Delta}\mathbb{C}L_\alpha$, where $\Delta$ is an additive subgroup of $\mathbb{C}$. For each $\alpha,\beta\in \Delta$, the $\circ$ product is given as follows
			\[ L_\alpha \circ L_\beta=(\beta+b)L_{\alpha+\beta}.\]
			\item $A_3=\oplus_{(\alpha,n) \in \Delta\times \mathbb{N}}\mathbb{C}L_{(\alpha,n)}$, where $\Delta$ is an additive subgroup of $\mathbb{C}$. Then
			\[ L_{\alpha,i}\circ L_{\beta,j} = (\beta+b)L_{\alpha+\beta,i+j}+jL_{\alpha+\beta,i+j-1}, \]
			for each $\alpha, \beta \in  \Delta, i,j\in  \mathbb{N}$.
		\end{enumerate}
	\end{p}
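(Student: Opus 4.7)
The plan is to exploit the rigid module-theoretic constraint imposed by the hypothesis $\mathcal{L}_0 = Vir$. Since $\mathcal{L}$ is a simple Lie conformal algebra, each nonzero $\mathcal{L}_i$ is torsion-free as a $\mathbb{C}[\partial]$-module by the quoted $\text{Tor}$-triviality result, so (C1) forces $\mathcal{L}_i = \mathbb{C}[\partial]L_i$. Writing $L$ for the Virasoro generator with $[L_\lambda L] = (\partial+2\lambda)L$, the bracket $[L_\lambda L_i]$ endows $\mathcal{L}_i$ with the structure of a rank-one conformal $Vir$-module, i.e.\ a free intermediate series module, and the classification of these yields $[L_\lambda L_i] = (\partial + a_i\lambda + b_i)L_i$ for constants $a_i, b_i \in \mathbb{C}$.

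Next, for any $i, j$ with $\mathcal{L}_{i+j}\neq 0$, I would write $[{L_i}_\lambda L_j] = f_{i,j}(\partial,\lambda)L_{i+j}$. Applying Jacobi to the triple $(L, L_i, L_j)$ produces a functional equation that, on comparing monomials in $\lambda, \mu, \partial$, forces $f_{i,j}$ to have total degree at most one with coefficients determined by $a_i, b_i, a_j, b_j, a_{i+j}, b_{i+j}$, except in the resonant situation where $a_{i+j}=0$, which will turn out to be the seed of the $SCL_2(b,s)$ phenomenon. Skew-symmetry then supplies the relation $f_{i,j}(\partial,\lambda) = -f_{j,i}(-\partial-\lambda,\lambda)$, pinning down $f_{i,j}$ up to a small number of free parameters.

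I would then apply the Jacobi identity on triples $(L_i, L_j, L_k)$ with all three indices in the support, and compare coefficients. A bootstrap argument, driven by simplicity of the graded algebra (so that from any nonzero $\mathcal{L}_i$ one generates the full support), shows that the sequences $(a_i)$ and $(b_i)$ are affine in $i$, namely $a_i = i + 2b$ and $b_i = si$ for fixed $b, s \in \mathbb{C}$. The support $I = \{i \in \mathbb{Z} : \mathcal{L}_i \neq 0\}$ is then an additive subset closed under the structural constraints from simplicity, and the only possibilities are $I = \{0\}$ (yielding $Vir$), $I = \mathbb{Z}$ (yielding $\mathcal{V}(s)$ when $b = 0$, and $CL_2(b,s)$ when $2b \notin \mathbb{Z}$), and $I = \mathbb{Z}_{\geq -1}$ (yielding $CL_1(s)$, where $b=1$ and the lower bound $-1$ is singled out because $a_{-1} = 0$ blocks further negative extension).

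The main obstacle is the resonant case $2b \in \mathbb{Z}\setminus\{0\}$. Here $-2b$ lies in the support, $a_{-2b}=0$, and the naive degree-one formula for $[{L_i}_\lambda L_{-2b-i}]$ becomes inconsistent with both Jacobi and skew-symmetry, so strictly higher-degree terms in $\partial, \lambda$ must be introduced exactly along the diagonal $i+j=-2b$. To isolate these anomalous brackets I would invoke the structural analysis of the subalgebra generated by $\mathcal{L}_k$ and $\mathcal{L}_{-k}$ when $[{\mathcal{L}_k}_\lambda \mathcal{L}_{-k}]\neq 0$, announced as a key technical input of Section 3, together with the classification via Novikov and Gel'fand-Dorfman data (Proposition \ref{OP}) applied to the quotient $\mathcal{L}/\partial\mathcal{L}$, which extracts precisely the anomalous higher-order brackets defining $SCL_2(b,s)$. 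Collecting the four non-trivial families together with the degenerate case $Vir$ completes the classification.
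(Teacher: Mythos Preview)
Your proposal does not address the stated proposition at all. Proposition~\ref{OP} is Osborn's classification of certain simple Novikov algebras; it is quoted from \cite[Theorem 1.3]{O} and the paper provides no proof of it, only a remark that the known errors in \cite{O} are irrelevant over $\mathbb{C}$ with $b\in\mathbb{C}^*$. There is therefore no ``paper's own proof'' to compare against, and any genuine attempt would have to work inside the category of Novikov algebras: analyze the eigenspace decomposition for left multiplication by $e$, show the eigenvalues form the set $\{b+i:i\ge -1\}$ or $b+\Delta$, and so on. Nothing of that kind appears in your text.

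What you have actually written is a sketch of the proof of Theorem~\ref{mt}, the paper's main classification of $\mathbb{Z}$-graded Lie conformal algebras; indeed you explicitly invoke Proposition~\ref{OP} as a tool, which makes the confusion plain. Even read as an outline for Theorem~\ref{mt}, the sketch contains a substantive error: you assert that the Jacobi identity on $(L,L_i,L_j)$ forces $f_{i,j}(\partial,\lambda)$ to have total degree at most one outside a single resonant case. This is false. Propositions~\ref{p0} and~\ref{p1} of the paper show that $\deg p_{i,j}$ can be $0,1,2$ or $3$ depending on the parameters $a_i,a_j,a_{i+j}$, and a large part of Section~3 (Lemmas~\ref{l1}, \ref{333}, Propositions~\ref{l2}, \ref{bcp}) is devoted to ruling out or organizing these higher-degree possibilities via the subalgebras $\mathcal{L}[k]$ and the partition $Supp(\mathcal{L})_1=I_0\cup I_1\cup I_2$. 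Your bootstrap to ``$(a_i)$ affine in $i$'' skips precisely this work.
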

	\begin{remark}
		A list of serious accuracies in \cite{O} was listed in \cite{X}. Notice that results in \cite{O} are all correct if $\mathbb{C}$ is chosen as the base field, as we do in this paper. 
	\end{remark}
	\begin{definition} Suppose that $V$ is a vector space. We say a Lie conformal algebra $R=\mathbb{C}[\partial]V$ is  \emph{quadratic}  if  for any $a,b \in V$,
		\[ [a_\lambda b]=u\partial+v\lambda+w,\]
		for some $u,v,w \in V$.
	\end{definition}
	
	\begin{definition} A \emph{Gel’fand-Dorfman algebra} is a triple $(V, \circ, [\cdot,\cdot])$, where $V$ is a vector space over $\mathbb{C}$, $(V,\circ)$ forms a Novikov algebra and $(V,[\cdot,\cdot])$ forms a Lie algebra, together with 
		following compatibility conditions:
		\[ [a \circ b, c]-[a \circ c, b]+[a, b] \circ c-[a, c] \circ b-a \circ [b, c]=0, \]
		for $a$, $b$ and $c \in V$.
	\end{definition}
	The Gel'fand-Dorfman algebra structure over $A_1$ and $A_2$ has been determined as follows.
	\begin{p}\label{XP}(\cite[Theorem 3.1]{X})
		Any Gel'fand-Dorfman algebra structure  over the  Novikov algebra  $A_1$ is
		given as follows: for all  $i,j\geq -1$,
		\[ [L_i,L_j] = s(i-j)L_{i+j},\    \text{for}\  \text{some} \ s \in  \mathbb{C}.\]
		Any Gel'fand-Dorfman algebra structure  over Novikov algebra the  $A_2$ is
		given as follows:  for  all $i,j \in \mathbb{Z}$,
		\[ [L_i,L_j]=s(i-j)L_{i+j}, \ \text{for}\  \text{some} \ s \in  \mathbb{C}.\]
	\end{p}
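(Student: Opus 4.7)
The plan is to fix an arbitrary Lie bracket $[L_i, L_j]=\sum_k c_{ij}^k L_k$ (finite sums) on $A_1=\oplus_{i\ge -1}\mathbb{C}L_i$ making it a Gel'fand-Dorfman algebra, and to extract the structure constants $c_{ij}^k$ from the compatibility identity
\begin{equation*}
[a\circ b, c] - [a\circ c, b] + [a,b]\circ c - [a,c]\circ b - a \circ [b,c] = 0
\end{equation*}
by specialising to carefully chosen triples. Since $L_0$ acts as a right identity ($L_k\circ L_0 = L_k$) and by multiplication by $k+1$ on the left, it plays the role of a Cartan-type element. Specialising to $a=b=L_0,\ c=L_k$ collapses the identity to $\sum_m c_{0,k}^m(k-m)L_m=0$, forcing $[L_0,L_k]=d_k L_k$ for a scalar $d_k$. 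Specialising to $a=L_0,\ b=L_i,\ c=L_k$ then yields
\begin{equation*}
\sum_m c_{i,k}^m(i+k+1-m)L_m + \bigl(d_i(k+1)-d_k(i+1)\bigr)L_{i+k} = 0,
\end{equation*}
which confines the support of $[L_i,L_k]$ to $\{L_{i+k},L_{i+k+1}\}$ and identifies the main coefficient $c_{i,k}^{i+k}=d_k(i+1)-d_i(k+1)$.

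Next, specialising to $a=L_i,\ b=L_j,\ c=L_0$ gives $(j+1)(d_{i+j}-d_i-d_j)=0$; for $j\neq -1$ this is additivity, and combined with $d_0=0$ plus the lone exceptional case $(i,j)=(-1,2)$ it forces $d_i=si$ for a single $s\in\mathbb{C}$. Hence the main coefficient equals $s(k-i)$, matching the statement up to the sign of $s$. To kill the extra coefficient $e_{i,k}:=c_{i,k}^{i+k+1}$ I would then run the identity on a fully generic triple $(L_i,L_j,L_k)$: the $L_{i+j+k}$ coefficient cancels identically once $d_i=si$ is inserted, while the $L_{i+j+k+1}$ coefficient reduces to
\begin{equation*}
(j+1)e_{i+j,k} - (k+1)e_{i+k,j} + (k+1)e_{i,j} - (j+1)e_{i,k} - (j+k+2)e_{j,k} = 0.
\end{equation*}
Setting $i=0$ forces $e_{0,j}$ proportional to $j+1$, and then $j=0$ kills the proportionality constant; antisymmetry combined with $k=-1$ gives $e_{\cdot,-1}\equiv 0$; finally $i=-1$ turns the identity into the two-index recurrence $(j+k+2)e_{j,k}=(j+1)e_{j-1,k}-(k+1)e_{k-1,j}$, which inducts off these base cases to give $e_{j,k}\equiv 0$ throughout.

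The $A_2$ case runs in parallel with $L_0\circ L_0=bL_0$: the same two-step Cartan computation gives $[L_0,L_\alpha]=d_\alpha L_\alpha$ and restricts the support of $[L_\alpha,L_\beta]$ to $\{L_{\alpha+\beta},L_{\alpha+\beta+b}\}$ (with the second only possible when $b\in\Delta$); additivity of $d_\cdot$ now holds unconditionally, yielding the scalar relation $d_\alpha=s\alpha$ and hence the main coefficient $s(\alpha-\beta)$; and when $b\in\Delta$ the analogue of the above recurrence, shifted by $b$, eliminates the extra term. The main obstacle in both cases is precisely the extra term: the grading alone does not exclude it, so the proof really hinges on extracting the linear recurrence on the $e$'s and iterating it from the base cases. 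Once that is handled, antisymmetry and Jacobi are automatic and the formula $[L_i,L_j]=s(i-j)L_{i+j}$ is forced.
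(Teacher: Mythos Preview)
The paper does not give its own proof of this proposition; it is simply quoted from \cite[Theorem~3.1]{X}. Your direct argument via the compatibility identity is a reasonable approach and is essentially correct, but there is one genuine gap.

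The step ``antisymmetry combined with $k=-1$ gives $e_{\cdot,-1}\equiv 0$'' does not follow from the compatibility identity. Plugging $k=-1$ into your five-term relation for general $i$ collapses it to
\[
(j+1)\bigl(e_{i+j,-1}-e_{i,-1}-e_{j,-1}\bigr)=0,
\]
which gives only additivity: $e_{i,-1}=ci$ for some constant $c=e_{1,-1}$. Feeding this into your $i=-1$ recurrence then produces genuinely nonzero values, for instance $e_{1,2}=c/5$, $e_{1,3}=7c/15$, $e_{2,3}=12c/35$, $e_{2,4}=3c/4$, and one can check directly that this one-parameter family satisfies the full compatibility identity for every triple $(i,j,k)$ (the identity is linear in the bracket, so this is just saying the solution space of the compatibility condition is two-dimensional, parametrized by $s$ and $c$). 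Hence compatibility alone does \emph{not} force $e\equiv 0$, contrary to your concluding remark that ``antisymmetry and Jacobi are automatic''.

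What actually kills $c$ is the Jacobi identity of the Lie bracket. The $L_{i+j+k+2}$-component of Jacobi on $(L_{-1},L_1,L_2)$ reads
\[
e_{-1,1}\,e_{1,2}+e_{1,2}\,e_{4,-1}+e_{2,-1}\,e_{2,1}
=(-c)\tfrac{c}{5}+\tfrac{c}{5}(4c)+(2c)\bigl(-\tfrac{c}{5}\bigr)=\tfrac{c^{2}}{5},
\]
so $c=0$. Once $c=0$, your recurrence does induct cleanly to $e_{j,k}\equiv 0$, and then the remaining bracket $s(j-i)L_{i+j}$ is the Witt bracket, for which Jacobi is indeed automatic. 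The same correction is needed in your sketch for $A_2$ when $b\in\Delta$: the extra term is eliminated by Jacobi, not by the compatibility recurrence alone.
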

	\begin{p} \cite{X}. Suppose that $V$ is a vector space. Then $R=\mathbb{C}[\partial]V$ is a quadratic Lie conformal algebra if and only if the $\lambda$-bracket of $R$ is given as follows
		\[ [a_\lambda b]=\partial(b\circ a)+[b,a]+\lambda(a\ast b) , a, b\in V.\]
		where $a\ast b=b\circ a +a\circ b$ and $(V,\circ,[\cdot,\cdot])$ is a Gel'fand-Dorfman algebra. Therefore, $R$ is called the quadratic Lie conformal algebra corresponding to the Gel'fand-Dorfman algebra  $(V,\circ,[\cdot,\cdot])$. Conversely, we use $R_N$ and $R_{GD}$ to denote the corresponding Novikov algebra and Gel'fand-Dorfman algebra of $R$.
	\end{p}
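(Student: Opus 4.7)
The plan is to combine Virasoro conformal module theory with a Jacobi-identity analysis that first determines the coefficient polynomials, then reduces the quadratic part of the classification to Osborn's theorem (Proposition~\ref{OP}) via the Gel'fand-Dorfman correspondence, and finally handles a residual non-quadratic family by direct bracket reconstruction.

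Since $\mathcal{L}$ is simple it is free as a $\mathbb{C}[\partial]$-module, so every nonzero $\mathcal{L}_i=\mathbb{C}[\partial]L_i$. Normalizing $[L_{0_\lambda}L_0]=(\partial+2\lambda)L_0$, the Jacobi identity applied to $[L_{0_\lambda}[L_{0_\mu}L_i]]$ forces the rank-one free Vir-module form $[L_{0_\lambda}L_i]=(\partial+b_i\lambda+c_i)L_i$ for certain $b_i,c_i\in\mathbb{C}$. Writing $[L_{i_\lambda}L_j]=P_{i,j}(\partial,\lambda)L_{i+j}$ (with $P_{i,j}=0$ if $\mathcal{L}_{i+j}=0$), the Jacobi identity on $[L_{0_\lambda}[L_{i_\mu}L_j]]$ yields a three-variable functional equation for $P_{i,j}$; specializing $\lambda=0$ gives the additivity $c_{i+j}=c_i+c_j$ whenever $P_{i,j}\neq 0$, while comparing top $\lambda$-degree terms bounds $\deg_\lambda P_{i,j}$. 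Combining these with simplicity and Section~3's analysis of $\langle\mathcal{L}_k,\mathcal{L}_{-k}\rangle$ should force $c_i=s\,i$ for a single $s\in\mathbb{C}$, reduce the support $S=\{i:\mathcal{L}_i\neq 0\}$ (after a possible relabeling of the $\mathbb{Z}$-grading) to $\{0\}$, $\mathbb{Z}_{\geq -1}$, or $\mathbb{Z}$, and show that $i\mapsto b_i$ is either identically $2$ or of the form $i/b+2$ for some $b\in\mathbb{C}^*$.

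Next I split into the \emph{quadratic} regime, where every $P_{i,j}$ is at most linear in $\partial,\lambda$, and the \emph{non-quadratic} exceptional regime. The case $S=\{0\}$ is immediately $Vir$. In the quadratic regime with $|S|>1$, the last proposition of Section~2 realizes $\mathcal{L}$ as the quadratic Lie conformal algebra of a Gel'fand-Dorfman algebra on $V=\bigoplus\mathbb{C}L_i$. If the Novikov part $R_N$ is simple, Osborn's Proposition~\ref{OP} applies with $e$ a scalar multiple of $L_0$---case $A_3$ is excluded because its $L_0$-eigenspaces are infinite-dimensional in any compatible $\mathbb{Z}$-grading with rank-one components, violating $(C1)$---and Proposition~\ref{XP} supplies the Lie part $[L_i,L_j]=s(i-j)L_{i+j}$. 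Re-assembling through $[L_{i_\lambda}L_j]=\partial(L_j\circ L_i)+[L_j,L_i]+\lambda(L_i\ast L_j)$ then yields $CL_1(s)$ from $A_1$ and $CL_2(b,s)$ from $A_2$ with $2b\notin\mathbb{Z}$. The remaining quadratic possibility, in which $R_N$ carries the commutative-associative product $L_i\circ L_j=L_{i+j}$ (the degenerate slope-zero case $b_i\equiv 2$), is not simple as a Novikov algebra but still yields a simple conformal algebra, namely $\mathcal{V}(s)$; this is handled by substituting back into the functional equation directly.

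The principal obstacle is the non-quadratic exceptional regime, which must account for $SCL_2(b,s)$. Here some $P_{i_0,j_0}$ has total degree $\geq 2$, and the task is to localize precisely where this can happen and to reconstruct the brackets. I expect the functional equation together with Section~3's subalgebra analysis to force the higher-degree behaviour to appear only for index pairs with $i+j=-2b$ where $2b\in\mathbb{Z}\setminus\{0\}$: indeed, the generic linear formula loses its $\lambda$-coefficient exactly when $i+j+2b=0$, opening room for an anomalous cofactor $(\partial+\lambda+2s)$ that is then pinned down by Jacobi. The diagonal bracket $[L_{-2b_\lambda}L_{-2b}]$---which involves the cubic expression $-b(-\lambda+2s)(\partial+\lambda+2s)(\partial+2\lambda)L_{-4b}$---is recovered from skew-symmetry, and the reduced brackets $[L_{i_\lambda}L_j]=\tfrac{i-j}{2}L_{i+j}$ with $i+j=-2b$ and $i,j\neq 0$ are forced by Jacobi on $(L_i,L_j,L_{-2b-i-j})$-type triples. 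Checking that the resulting candidate $\lambda$-brackets verify every Jacobi identity, with $b$ and $s$ as the only surviving parameters, is the delicate final step.
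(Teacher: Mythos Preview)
Your proposal addresses the wrong statement. The proposition in question is the Gel'fand--Dorfman correspondence cited from \cite{X}: it asserts that a free $\mathbb{C}[\partial]$-module $R=\mathbb{C}[\partial]V$ with $\lambda$-brackets of the form $[a_\lambda b]=u\partial+v\lambda+w$ (for $u,v,w\in V$) is a Lie conformal algebra if and only if the induced operations $\circ$ and $[\cdot,\cdot]$ on $V$ make it a Gel'fand--Dorfman algebra. The paper does not prove this---it is quoted as a known result---and a proof would consist of writing $[a_\lambda b]=\partial(b\circ a)+[b,a]+\lambda(a\ast b)$, imposing skew-symmetry and the Jacobi identity, and reading off the Novikov, Lie, and compatibility axioms by comparing coefficients of $\partial^i\lambda^j\mu^k$.

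What you have written is instead a proof strategy for Theorem~\ref{mt}, the main classification result. Your outline (Virasoro module constraints on $p_{0,i}$, the functional equation for $P_{i,j}$, reduction to Osborn's list in the quadratic regime, and a separate treatment of the $SCL_2$ exceptional family) is broadly in the spirit of the paper's Section~3, but it is simply not a proof of the proposition you were asked about. If you intended to prove the cited proposition, you need to discard the classification machinery entirely and carry out the direct coefficient comparison sketched above; if you intended to prove Theorem~\ref{mt}, you have misidentified the target statement.
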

	We can get  two $\mathbb{Z}$-graded quadratic Lie conformal algebras from the Gel'fand-Dorfman algebra $A_1$,$A_2$ in Proposition \ref{XP}  as follows: for some $b,s\in \mathbb{C}$,
	\[ CL_1(s)=\bigoplus_{i \in \mathbb{Z}_{\geq -1}}\mathbb{C}[\partial]L_i: [{L_i}_\lambda L_j]=((i+1)\partial+(i+j+2)\lambda+s(j-i))L_{i+j},\]
	\[ CL_2(b,s)=\bigoplus_{i \in \mathbb{Z}}\mathbb{C}[\partial]L_i:[{L_i}_\lambda L_j]=((i+b)\partial+(i+j+2b)\lambda+s(i-j))L_{i+j}\]
	As stated in \cite{HW},  $CL_2(b,s)$ is not simple when $2b \in \mathbb{Z}$.  It has a proper graded ideal $SCL_2(b,s)=\bigoplus_{i\neq -2b}\mathbb{C}[\partial]L_i \oplus \mathbb{C}[\partial](\partial+2s)L_{-2b}$. It is easy to see that $SCL_2(b,s)$ is simple as a $\mathbb{Z}$-graded Lie conformal algebra. 
	
	\section{Classification of simple $\mathbb{Z}$-graded Lie conformal algebras $\mathcal{L}$}
	
	In this section,  we will classify the following  simple $\mathbb{Z}$-graded  Lie conformal algebras $\mathcal{L}=\bigoplus_{i\in \mathbb{Z}} \mathcal{L}_i$ such that \begin{enumerate}
		\item $rank\mathcal{L}_i \leq 1$; 
		\item $\mathcal{L}_0$ is the Virasoro Lie conformal algebra.
	\end{enumerate}

	Set
	\[ Supp(\mathcal{L})=\{i \in \mathbb{Z}| rank\mathcal{L}_i=1\}.\] 
	If $\mathcal{L}\cong Vir$, then $Supp(\mathcal{L})=\{0\}$. In the sequel, we may focus on the case that  $|Supp(\mathcal{L})|>1$
	
	For each $i \in Supp(\mathcal{L})$, $\mathcal{L}_i$ can be regarded as a free rank one $\mathbb{C}[\partial]$-module with a $\mathbb{C}[\partial]$-basis $\{L_i\}$.  Hence for $i,j \in Supp(\mathcal{L})$,  $[{L_i}_\lambda L_j]=p_{i,j}(\partial,\lambda)L_{i+j}$ for some  $p_{i,j}(\partial,\lambda) \in \mathbb{C}[\partial,\lambda]$ (If $i+j\not\in Supp(\mathcal{L})$, we can identify $p_{i,j}(\partial,\lambda)$ with $0$). In addition,  we choose $L_0$ as a standard generator of the Virasoro Lie conformal algebra.  \cite[Lemma 5]{PK} implies that the set $S=\{i\in Supp(\mathcal{L})| p_{0,i}(\partial,\lambda)=0\}$ is empty otherwise $\bigoplus_{i\in S}\mathcal{L}_i$ will be a non-trivial proper ideal of $\mathcal{L}$. 
	Thus from \cite[Theorem 8.2]{DK},  for each $i\in Supp(\mathcal{L})$,  \[ p_{0,i}(\partial,\lambda)=\partial+a_i\lambda +b_i\] for some $a_i$,$b_i \in \mathbb{C}$.    Suppose that $i$, $j\in Supp(\mathcal{L})$ and $i+j\in Supp=(\mathcal{L})$.  By Jacobi-identity of $L_0,L_i,L_j$, the structure constants $p_{i,j}(\partial,\lambda)$ should satisfy the following equation
	\begin{equation}\label{esx1}\begin{aligned}
			(-\lambda-\mu+a_i\lambda+b_i)p_{i,j}(\partial,\lambda+\mu)=&p_{i,j}(\partial+\lambda,\mu)(\partial+a_{i+j}\lambda+b_{i+j})\\
			&-(\partial+\mu+a_j\lambda+b_j)p_{i,j}(\partial,\mu).
		\end{aligned}
	\end{equation} 
	
	One can see that the highest degree term of $p_{i,j}(\partial,\lambda)$, denoted by  $p^1_{i,j}(\partial,\lambda)$, is  determined by $a_i,a_j$ and  $a_{i+j}$.

	The Equation (\ref{esx1}) has been well studied in \cite{LY} as follows.
	
	\begin{p}{\label{p3.1}}If $p_{i,j}(\partial,\lambda)$ is some non-zero solutions of  Equation (\ref{esx1}), then 
		\begin{enumerate}
			\item[(i)] $a_i+a_j=a_{i+j}+\text{deg}\  p_{i,j}(\partial,\lambda)+1$.
			\item[(ii)] $b_i+b_j=b_{i+j}$.
		\end{enumerate}
		If $a_{i+j} \neq 0$, the nonzero homogeneous possible solution of  $p^{1}_{i,j}(\partial,\lambda)$ (up to scalars) is given as follows:
		\begin{enumerate}
			\item  $a_i\neq 1$.
			\begin{enumerate}
				\item $\text{deg}\  p^1_{i,j}(\partial,\lambda)=0$,\ $p^1_{i,j}(\partial,\lambda)=1$. 
				\item $\text{deg}\  p^1_{i,j}(\partial,\lambda)=1$,\ $p^1_{i,j}(\partial,\lambda)=\partial-\frac{a_{i+j}}{1-a_i}\lambda$.
				\item $\text{deg}\  p^1_{i,j}(\partial,\lambda)=2$,\ $a_j=1$, \ $p^1_{i,j}(\partial,\lambda)=(\partial+\lambda)(\partial-\frac{a_{i+j}}{1-a_i})$.
				\item $\text{deg}\  p^1_{i,j}(\partial,\lambda)=3$,\ $a_i=a_j=\frac{5}{3}$, \  $p^1_{i,j}(\partial,\lambda)=(\partial-\lambda)(\partial+2\lambda)(\partial+\frac{\lambda}{2}).$
			\end{enumerate}
			\item $a_i=1$.
			\begin{enumerate}
				\item $\text{deg}\  p_{i,j}(\partial,\lambda)=0$,   $p^1_{i,j}(\partial,\lambda)=1$.
				\item $\text{deg}\  p_{i,j}(\partial,\lambda)=1$,    $p^1_{i,j}(\partial,\lambda)=\lambda$.
				\item $\text{deg}\  p_{i,j}(\partial,\lambda)=2$,   $p^1_{i,j}(\partial,\lambda)=\lambda(\partial-a_{i+j}\lambda)$.
				\item $\text{deg}\  p_{i,j}(\partial,\lambda)=3$,  $a_j=1$,  $p^1_{i,j}(\partial,\lambda)=\lambda(\partial+\lambda)(\partial+2\lambda)$.
			\end{enumerate}
		\end{enumerate}
	\end{p}
	
	Indeed,  the case $a_{i+j}=0$ can be treated similarly. 
	As a consequence,  we have the following proposition.
	\begin{p}{\label{p1}} If $a_{i+j}=0$, the nonzero homogeneous possible solution(up to a scalar) of  Equation (\ref{esx1})  is given as follows:
		\begin{enumerate}
			\item $\text{deg}\  p^1_{i,j}(\partial,\lambda)=0$,\ $p^1_{i,j}(\partial,\lambda)=1$.
			\item $\text{deg}\  p^1_{i,j}(\partial,\lambda)=1$,\ $a_i\neq 1$ \text{or} $a_j\neq 1$,  $p^1_{i,j}(\partial,\lambda)=\partial$.
			\item $\text{deg}\  p^1_{i,j}(\partial,\lambda)=2$,\  $a_i\neq 1$, $p^1_{i,j}(\partial,\lambda)=\partial(\partial-\frac{1}{1-a_i}\lambda)$.
			\item  $\text{deg}\  p^1_{i,j}(\partial,\lambda)=2$,\  $a_i=1$,\ $a_j=2$,\  $p^1_{i,j}(\partial,\lambda)=\partial\lambda$.
			\item  $\text{deg}\  p^1_{i,j}(\partial,\lambda)=3$,\  $a_i=1$, $a_j=3$,  $p^1_{i,j}(\partial,\lambda)=\partial\lambda(\partial-\lambda)$.
			\item $\text{deg}\  p^1_{i,j}(\partial,\lambda)=3$,\ $a_i=3$,\  $a_j=1$, \  $p^1_{i,j}(\partial,\lambda)=\partial^3+\frac{3}{2}\partial^2\lambda+\frac{1}{2}\partial\lambda^2$.
		\end{enumerate}
	\end{p}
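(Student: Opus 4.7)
The plan is to bootstrap from Proposition \ref{p0} via the factorization already noted in equation (\ref{fe}). For $\deg p_{i,j} = 0$, $p_{i,j}$ is a nonzero constant, giving item (1). For $\deg p_{i,j} \geq 1$, one has $p_{i,j}(\partial, \lambda) = (\partial + b_{i+j}) f_{i,j}(\partial,\lambda)$; substituting this into equation (\ref{esx1}) and dividing through by the common factor $\partial + b_{i+j}$, a short calculation shows that $f_{i,j}$ satisfies the very same equation with $a_{i+j}$ replaced by $1$. Thus every admissible top-degree component $p^1_{i,j}$ is of the form $\partial \cdot f^1_{i,j}$ where $f^1_{i,j}$ is a top-degree solution drawn from the $a_{i+j} = 1$ list of Proposition \ref{p0}.

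Concretely, I would enumerate by $k_f := \deg f^1_{i,j} = k - 1$ and by whether $a_i = 1$. The case $k_f = 0$ gives $f^1 = 1$ and hence $p^1 = \partial$, producing item (2). The case $k_f = 1$ splits via Proposition \ref{p0}(1)(b) and (2)(b) specialized to $a_{i+j} = 1$: the subcase $a_i \neq 1$ yields $f^1 = \partial - \frac{1}{1-a_i}\lambda$ and hence $p^1 = \partial^2 - \frac{1}{1-a_i}\partial\lambda$, which is item (3); the subcase $a_i = 1$ yields $f^1 = \lambda$ and hence $p^1 = \partial\lambda$, which is item (4). For $k_f = 2$, Proposition \ref{p0}(1)(c) and (2)(c) together with the sum constraint $a_i + a_j = k_f + 2 = 4$ (equivalently $a_i + a_j = \deg p_{i,j} + 1$ from Proposition \ref{p0}(i) applied with $a_{i+j} = 0$) force $(a_i, a_j) = (3, 1)$ or $(1, 3)$, producing items (6) and (5) respectively after multiplying the corresponding $f^1$ by $\partial$.

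The only delicate step is ruling out $k \geq 4$. Here I expect the main obstacle to be verifying that the two degree-$3$ entries of Proposition \ref{p0} are both incompatible with the reduced setting: case (1)(d) requires $a_i = a_j = 5/3$, which contradicts the constraint $a_i + a_j = k_f + 2 = 5$; and case (2)(d) requires $a_i = a_j = 1$, which likewise contradicts $a_i + a_j = 5$. Consequently no degree-$3$ solution for $f_{i,j}$ exists, no $k \geq 4$ solution for $p_{i,j}$ can arise from the recursion, and the enumeration in items (1)--(6) is exhaustive. Apart from this compatibility check, the remaining computations are purely mechanical multiplications by $\partial$ applied to the formulas of Proposition \ref{p0}.
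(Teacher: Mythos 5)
Your proposal is correct and follows essentially the same route as the paper, which derives Proposition \ref{p1} precisely by the factorization $p_{i,j}=(\partial+b_{i+j})f_{i,j}$ of equation (\ref{fe}), the observation that $f_{i,j}$ solves equation (\ref{esx1}) with $a_{i+j}=1$, and a read-off from Proposition \ref{p0}. Your explicit use of the constraint $a_i+a_j=\deg f_{i,j}+2$ to eliminate the degree-$3$ entries (and hence all $k\geq 4$) just makes precise what the paper leaves as "straightforward."
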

	\begin{proof}
		Let $\partial=-b_{i+j}$ in Equation (\ref{esx1}), we have
		\begin{equation}\label{rec1}
			(-\lambda-\mu+a_i\lambda+b_i)p_{i,j}(-b_{i+j},\lambda+\mu)=-(-b_{i+j}+\mu+a_j\lambda+b_j)p_{i,j}(-b_{i+j},\mu).
		\end{equation} 
		If $p_{i,j}(-b_{i+j},\mu)\not\in \mathbb{C}$, then the degree of $\lambda$ of both sides of  Equation {\eqref{rec1}} forces that  $a_i=a_j=1$.  In conclusion, if $a_{i}\neq 1$ or $a_j\neq 1$, then we have 
		\begin{equation}{\label{fe}}
			p_{i,j}(\partial,\lambda)=
			\begin{cases}
				(\partial+b_{i+j})f_{i,j}(\partial,\lambda),& 
				\text{if   $\text{deg}\   p_{i,j}(\partial,\lambda)\geq 1$}, \\
				w_{ij},&\text{else},
			\end{cases}
		\end{equation}
		for some $w_{ij}\in \mathbb{C}$ and   $f_{i,j}(\partial,\lambda)\in \mathbb{C}[\partial,\lambda]$.
		It is straightforward that $f_{i,j}(\partial,\lambda)$ is a  solution of Equation (\ref{esx1}) with $a_{i+j}=1$. The left can be obtained by Proposition  {\ref{p3.1}}.
	\end{proof}

	\begin{corollary}\label{c3.1}
		If $Re(a_i),\ Re(a_j)\geq 2$ and $p_{i,j}(\partial,\lambda)\neq 0$ for some $i,j\in Supp(\mathcal{L})$, then $\text{deg}\  p_{i,j}(\partial,\lambda)\leq 1$.
	\end{corollary}
	\begin{proof}
		It is immediate from    Propositions \ref{p3.1} and  \ref{rec1}.
	\end{proof}
	Set \[Supp(\mathcal{L})_1=\{k>0| k,-k\in Supp(\mathcal{L}), p_{k,-k}(\partial,\lambda)\neq 0\}.\] 
	If $\mathcal{L}\not\cong Vir$,   $Supp(\mathcal{L})_1$ is not empty otherwise  $\bigoplus_{i\neq 0}\mathcal{L}_i$ will be a proper ideal of $\mathcal{L}$.
	For each $k\in Supp(\mathcal{L})_1$, let $\mathcal{L}[k]$ be the  subalgebra of $\mathcal{L}$ generated by ${L_k,L_{-k}}$. We will investigate the structure of  $\mathcal{L}[k]$.
	For each $i\in \mathbb{N}$, set \[I_i=\{k\in Supp(\mathcal{L})_1|\text{deg}\   p_{k,-k}(\partial,\lambda)=i\}.\] 
	\begin{p}\label{rec2}
		$I_i={0}$ whenever $i\geq 3$. In other words, \[Supp(\mathcal{L})_1=I_0\cup I_1 \cup I_2.\]
	\end{p}
	\begin{proof}
		One should notice that $a_k+a_{-k}=\text{deg}\  p_{-k,k}(\partial,\lambda)+3$ as $a_0=2$. If $p_{-k,k}(\partial,\lambda)=3$,  then
		$a_k+a_{-k}=6$, which is impossible by Propositions 3.1 and 3.2. 
	\end{proof}	\begin{lemma}{\label{l3.1}} Suppose that $t \in Supp(\mathcal{L})$ and  $k \in Supp(\mathcal{L})_1$.
		\begin{enumerate}
			\item  $p_{t,t}(\partial,\lambda)$ is divisible by $\partial+2\lambda$.
			\item  If  $p_{k,t}(\partial,\lambda)=0$ or  $p_{-k,t}(\partial,\lambda)=0$, then  $a_t=0$ or 1.
			\item  If $k \in I_i$ and $p_{k,t}(\partial,\lambda),p_{k+t,-k}(\partial,\lambda)\neq 0$, then  \[ \text{deg}\  p_{k,t}(\partial,\lambda)+\text{deg}\  p_{k+t,-k}(\partial,\lambda)=1+i.\]  
			\item If  $\text{deg}\  p_{-k,k}(\partial,\lambda)>0$ and $Re(a_k)\geq Re(a_{-k})$,  then $ p_{k,jk}(\partial,\lambda)\neq 0$ and $\text{deg}\  p_{k,jk}(\partial,\lambda)\leq 1$ for each $j>0$. Moreover, $Re(a_{uk})\geq Re(a_{vk})\geq 2$ for any $u>v\geq 0$.
			\item  Suppose that $k\in I_1$  and $Re(a_k)\geq Re(a_{-k})$. If $a_{-k}\neq 1$, then  $k\mathbb{Z}\subset Supp(\mathcal{L})$ and $\text{deg}\  p_{-k,jk}(\partial,\lambda)=1$ for each $j\geq 0$. Moreover, $p_{ik,jk}(\partial,\lambda)\neq 0$ for any $i<0$ and $j>0$.  
		\end{enumerate}
	\end{lemma}
	\begin{proof}
		
		$(1)$ By skew-symmetric property, we have  \[p_{t,t}(\partial,\lambda)=-p_{t,t}(\partial,-\partial-\lambda).\] Plugging $\partial=-2\lambda$ into above equation, we can see that  $p_{t,t}(-2\lambda,\lambda)=0$. Hence $p_{t,t}(\partial,\lambda)$ has a factor $\partial+2\lambda$.

		$(2)$  If  $p_{-k,t}(\partial,\lambda)=0$,  from the  following Jacobi-identity
		\[ [[{L_{-k}}_\lambda L_{k}]_{\lambda+\mu} L_t]=-[{L_{-k}}_\lambda [{L_{k}}_\mu L_t]],\]
		we have 
		\begin{equation}\label{e3.4} 
			p_{-k,k}(-\lambda-\mu,\lambda)(\partial+a_t\lambda+a_t\mu+b_t)=p_{k,t}(\partial+\lambda,\mu)p_{-k,t+k}(\partial, \lambda).
		\end{equation}
		Thus $\partial+a_t\lambda+a_t\mu+b_t$ is a factor of either $p_{k,t}(\partial+\lambda,\mu)$ or $p_{-k,t+k}(\partial, \lambda)$ since $\mathbb{C}[\partial,\lambda]$ is a unique  factorization domain. Hence $a_t=1$ or $0$.  It is similar for the case that $p_{k,t}(\partial,\lambda)=0$  .

		$(3)$ By  Proposition {\ref{p3.1}}$(i)$, we have \[a_k + a_t = a_{k+t} + \text{deg}\   p_{k,t}(\partial,\lambda)  + 1,\]
		\[a_{k+t} + a_{-k} = a_t + \text{deg}\   p_{k+t,-k}(\partial,\lambda)  + 1,\]
		\[a_k + a_{-k} =2+ \text{deg}\   p_{k, -k}(\partial,\lambda) + 1.\]
		Hence \[\text{deg}\   p_{k, t}(\partial,\lambda) + \text{deg}\   p_{k+t, -k}(\partial,\lambda) = a_k + a_{-k} - 2 = \text{deg}\   p_{k, -k}(\partial,\lambda) + 1.\]

		$(4)$ Notice that \[a_k + a_{-k} =2+ \text{deg}\   p_{k, -k}(\partial,\lambda) + 1\geq 4.\] Hence $Re(a_k)\geq 2$. And by $(2)$,  $p_{k,k}(\partial, \lambda)\neq 0$. Then by $(1)$ and Corollary \ref{c3.1}, we can see that $\text{deg}\   p_{k,k}(\partial, \lambda)=1$.  Assume that there exists some $j_0>0$ such that $p_{k,jk}(\partial,\lambda)\neq 0$ for each $1\leq j<j_0$ and $p_{k,j_0k}(\partial,\lambda)=0$. Then
		for each $1\leq j<j_0$, we have
		\[a_k + a_{jk} =a_{(j+1)k}+ \text{deg}\   p_{k, jk}(\partial,\lambda) + 1.\]
		Using induction on $j$, together with Corollary \ref{c3.1}, we have $Re(a_{uk})\geq Re(a_{vk})\geq 2$  for each $j_0\geq u>v\geq 1$ and $\text{deg}\   p_{k,jk}(\partial,\lambda)\leq 1$ for each $1\leq j<j_0$. In particular, $Re(a_{j_0k})\geq 2$. However, from (2), $a_{j_0k}=0$ or $1$ for $p_{k,j_0k}(\partial,\lambda)=0$, which is a contradiction. Thus $(4)$ holds.

		$(5)$ 
		First of all, notice that for each $s\geq 0$, $\text{deg}\   p_{k,sk}(\partial,\lambda)\leq 1$ by $(4)$. Suppose that there exists some $s_0>0$ such that $\text{deg}\    p_{k,s_0k}(\partial,\lambda)=0$. Then by $(3)$  $\text{deg}\   p_{-k,(s_0+1)k}(\partial,\lambda)=2$. 
		By Propositions \ref{p3.1} and \ref{p1}, it happens only when  $a_{-k}=1$, which is a contradiction.  Hence  $\text{deg}\   p_{k,sk}(\partial,\lambda)=1$ for each $s\geq 0$.   By $(3)$, we have \[ \text{deg}\   p_{-k,(s+1)k}(\partial,\lambda)=2-\text{deg}\   p_{k,sk}(\partial,\lambda)=1\] 
		for each $s\geq 0$.

		Next we shall prove that $p_{-k,-tk}(\partial, \lambda)\neq 0$ for any $t>0$. Indeed, Suppose that there exists a maximal $t_0<0$ such that  $ p_{-k,t_0k}(\partial, \lambda)=0$. Thus $p_{-k,tk}(\partial, \lambda)\neq 0$ for each $t>t_0$.  In this case, we claim that $p_{ik,jk}(\partial,\lambda)\neq 0$ for any $t_0\leq i<0$ and $j>0$.  Otherwise, let  $i_0\geq t_0$ be the maximal negative integer such that $p_{i_0k, jk}(\partial, \lambda)=0$ for some positive integer $j$.   From the Jacobi-identity of $L_{-k}$,$L_{(i_0+1)k}$ and $L_{jk}$, we obtain that
		\[  [{L_{-k}}_\lambda [{L_{(i_0+1)k}}_\mu L_{jk}]]=[{L_{(i_0+1)k}}_\mu [{L_{-k}}_\lambda L_{jk}]],\]
		Thus
		\begin{equation}{\label{bc}}p^1_{(i_0+1)k,jk}(\partial+\lambda,\mu)p^1_{-k,(i_0+j+1)k}(\partial,\lambda)=p^1_{-k,jk}(\partial+\mu,\lambda)p^1_{(i_0+1)k,(j-1)k}(\partial,\mu)
		\end{equation}
		Notice that $\text{deg}\    p^1_{-k,jk}(\partial,\lambda)=1$. Hence by
		Propositions 3.1,  \[p^1_{-k,jk}(\partial,\lambda)=c_{-k,jk}(\partial+\frac{a_{(j-1)k}}{a_{-k}-1}\lambda)\] for some  $c_{-k,jk}\in \mathbb{C}$. Since $p_{-k,jk}(\partial,\lambda)$ is irreducible, Equation (\ref{bc}) forces that $p_{(i_0+1)k,jk}(\partial+\lambda,\mu)$ is divisible by $p_{-k,jk}(\partial+\mu,\lambda)$.     Since $p_{(i_0+1)k,(j-1)k}(\partial,\lambda)\neq 0$, one can see that $\frac{a_{(j-1)k}}{a_{-k}-1}=1$. Hence
		$Re(a_{(j-1)k})=Re(a_{-k})-1\leq 1$, which is impossible by $(4)$. Thus we finish the proof of claim.  
		
		From the claim, we can see that $-t_0k\in Supp(\mathcal{L})_1$. Since  $ p_{-k,t_0k}(\partial, \lambda)=0$,  $a_{-k}=0$ and $a_{t_0k}=0$ or $1$ by $(2)$. Hence Equation {\eqref{e3.4}} implies that $p_{-k,-k}(\partial, \lambda)\neq 0$ by taking $t=-k$. Then by Proposition {\ref{p3.1}} and $(1)$, $\text{deg}\  p_{-k,-k}(\partial, \lambda)=1$. Thus $a_{-2k}=-2$. It forces that $a_{t_0k}\leq -2$, which is impossible. The contradiction means that  $ p_{-k,-tk}(\partial, \lambda)\neq 0$ for each $t>0$.  Hence $k\mathbb{Z}\subset Supp(\mathcal{L})$. The proof of claim also illustrates that $p_{ik, jk}(\partial,\lambda)\neq 0$ for each $i<0$ and $j>0$.  
	\end{proof}

	\begin{p}{\label{l2}}Suppose that  $k \in I_1\cup I_2$ and $Re(a_{-k})\leq Re(a_{k})$. If $a_{-k}=1$, then $\mathcal{L}[k]$ must be one of followings:
		\begin{enumerate}
			\item  $p_{-k,-k}(\partial,\lambda)=0$ and  $L[k]\cong CL_1(s)$ for some $s\in \mathbb{C}$.  
			\item $\text{deg}\  p_{-k,-k}(\partial,\lambda)=3$ and  $\mathcal{L}[k]\cong SCL_2(-1,s)$.
			\item $\text{deg}\  p_{-k,-k}(\partial,\lambda)=1$ and $\mathcal{I}:=\bigoplus_{i\leq -2}\mathcal{L}_{ik}\cap \mathcal{L}[k]$ is an ideal of $\mathcal{L}[k]$. Furthermore, $\mathcal{L}/\mathcal{I}\cong  CL_1(s)$ for some $s\in \mathbb{C}$.
		\end{enumerate}
		In particular, $a_{jk}=a_k+(j-1)(a_k-2)$ for each $j\geq 0$.
	\end{p}
	\begin{proof}
		First notice that by Lemma 3.1(1),  $\partial+2\lambda$ is a factor of $p_{-k,-k}(\partial,\lambda)$. Suppose that $p_{-k,-k}(\partial,\lambda)\neq 0$. If $a_{-2k}=0$, then by Proposition 3.1(i), we have \[\text{deg}\  p_{-k,-k}(\partial,\lambda)=1+1-1=1.\] If $a_{-2k}\neq 0$, then $\text{deg}\  p_{-k,-k}(\partial,\lambda)=3$ by  Proposition 3.1(2). Thus we distinguish the following three cases.

		$(1)$$p_{-k,-k}(\partial, \lambda)=0$. 	
		Since $Re(a_{jk})\geq 2$,  for each $j\geq 0$, $p_{-k,jk}(\partial,\lambda)\neq 0$ from  Lemma \ref{l3.1}(2).
		Now from Lemma \ref{l3.1}(3)(4),  one can see that $2\geq \text{deg}\  p_{-k,jk}(\partial,\lambda)\geq 1$ for each $j\geq 0$. Suppose that $\text{deg}\  p_{-k,j_0k}(\partial,\lambda)=2$ for some $j_0>0$. Then by Proposition 3.1,  \[p^1_{-k,j_0k}(\partial,\lambda)=c_{-k,j_0k}\lambda(\partial-a_{(j_0-1)k}\lambda)\]for some $c_{-k,j_0k}\in \mathbb{C}^*$. Plugging it into the Jacobi-identity of $L_{-k}$,$L_{-k}$ and $L_{j_0k}$, we can find that $a_{(j_0-1)k}=-1$, which is impossible. Hence from Lemma \ref{l3.1}(3), $\text{deg}\ p_{-k,jk}(\partial,\lambda)=\text{deg}\ p_{k,jk}(\partial,\lambda)=1$ for each $j\geq 0$. Thus by Proposition 3.1$(i)$, $a_{jk}=2+j$ for each $j\geq -1$. Combining with Corollary 3.1,    $\mathcal{L}[k]=\bigoplus_{i\geq -1}\mathbb{C}[\partial]L_{ik}$ is quadratic and $(\mathcal{L}[k])_N$ is the direct sum of eigenspace with distinct positive eigenvalues of  $L_0$. By the action of $L_{-k}$, one can see that  $(\mathcal{L}[k])_N$ is simple. Hence, by  Proposition \ref{OP}, $(\mathcal{L}[k])_N$ is isomorphic to $A_1$. Thus  $\mathcal{L}[k]\cong CL_1(s)$ for some $s\in \mathbb{C}$ by Proposition \ref{XP}.

		$(2)$ $\text{deg}\  p_{-k,-k}(\partial, \lambda)=3$. Thus $a_{-2k}=-2$.  In addition, by Lemma 3.1(4), $a_{ik}\geq 4$ for each $i\geq 1$. Considering the considering the Jacobi-identity of $L_{-k}$,$L_{-k}$ and $L_{k}$, coefficients before $\partial^2$ in Equation (\ref{e3.4}) forces that $deg p_{-k,k}(\partial, \lambda)=2$. Hence $a_k=4$.  By using induction on $j$, one is easy to see that $p_{-k,-jk}(\partial,\lambda)\neq 0$ and $a_{-jk}\leq -2$ for each $j\geq 1$. Hence 
		$\text{deg}\  p_{-k,uk}(\partial,\lambda) \leq 1$ and 	$\text{deg}\  p_{k,uk}(\partial,\lambda) \leq 2$  for each $u\neq -1$. Thus Lemma 3.1(3) and (4) force that 	$\text{deg}\  p_{k,uk}(\partial,\lambda)=1$ for each $u\neq -1$ and 	$\text{deg}\  p_{-k,vk}(\partial,\lambda)=2$ for each $v\neq -1$ or $0$. 
		It implies that $a_{uk}=2u+2$ for each $u\neq -1$. Thus $p_{uk,(-1-u)k}(\partial, \lambda)\neq 0$ for each $u \neq -1$ or $0$.   Hence, for each $u \neq -1$ or $0$,  $\text{deg}\  p_{uk,(-1-u)k}(\partial, \lambda)=0$ by Proposition 3.1(i).  For each $t\in \mathbb{Z}$,   the Jacobi-identity of $L_0$,$L_{-k}$ and $L_{tk}$ implies that 
		\begin{equation}{\label{ex2}}
			\begin{aligned}
				&(-\mu+b_{-k})p_{-k,tk}(\partial,\lambda+\mu)\\=&p_{-k,tk}(\partial+\lambda,\mu)(\partial+a_{(t-1)k}\lambda+b_{(t-1)k})\\
				&-(\partial+\mu+a_{tk}\lambda+b_{tk})p_{-k,tk}(\partial,\mu).
			\end{aligned}
		\end{equation}
		Let $\mu=b_{-k}$  in Equation (\ref{ex2}). We obtain that 
		\begin{equation}{\label{ex33}}
			p_{-k,tk}(\partial+\lambda,b_{-k})(\partial+a_{(t-1)k}\lambda+b_{(t-1)k})=
			(\partial+b_{-k}+a_{tk}\lambda+b_{tk})p_{-k,tk}(\partial,b_{-k}).
		\end{equation} 
		Notice that $a_{(t-1)k}\neq 0$ for each $i\in \mathbb{Z}$.  Since $\text{deg}\  p_{-k,tk}(\partial, \lambda)\geq 1$, the coefficients before $\lambda$ of Equation {\eqref{ex33}} forces that $p_{-k,tk}(\partial,b_{-k})=0$. Therefore $-\lambda+b_{-k}$  is a factor of $p_{-k,tk}(\partial,\lambda)$ for each $t\in \mathbb{Z}$. By skew-symmetric property that
		$p_{-k,-k}(\partial,\mu)$ is also divisible by $\partial+\lambda+b_{-k}$.
		Define a free $\mathbb{C}[\partial]$-module  $\mathcal{T}=\bigoplus_{i\in \mathbb{Z}}\mathbb{C}[\partial]T_i$ and  $\lambda$-brackets on  $\mathcal{T}$ is given as follows: 
		\[  [{T_i}_\lambda T_{-1-i}]=(\partial+b_{-k})p_{ik,(-1-i)k}(\partial, \lambda)T_{-1}, \text{for each}\  i \in \mathbb{Z},\]
		\[ [{T_i}_\lambda T_j]=p_{ik,jk}(\partial,\lambda)T_{i+j}, \text{for}\   i,j\neq -1\  \text{and}\  i+j\neq -1,\]
		\[  [{T_{-1}}_\lambda T_j]=\frac{ p_{-k, jk}(\partial,\lambda)}{(-\lambda+b_{-k})}T_{-1+j}, \text{for} \ j\neq 0\  \text{and}\  j\neq -1,\]
		\[   [{T_{-1}}_\lambda T_{-1}]=\frac{p_{-k,-k}(\partial,\lambda)}{(-\lambda+b_{-k})(\partial+\lambda+b_{-k})}T_{-2},  \text{for}\   j\neq 0.\]
		One can check directly that $\mathcal{T}$ 
		is quadratic and is isomorphic to $CL_2(\frac{1}{2},s)$ for some $s\in \mathbb{C}$. On the other hand, $\mathcal{L}[k]$ can be imbeded into $\mathcal{T}$ by mapping $L_{ik}$ to $T_i$ for $i\neq -1$ and mapping $L_{-k}$ to $(\partial+b_{-k})T_{-1}$.  Hence  $\mathcal{L}[k]\cong SCL_2(\frac{1}{2},s)$ for some $s\in \mathbb{C}$.
		
		$(3)$ $\text{deg} p_{-k,-k}(\partial,\lambda)=1$. In this case,  $a_{-k}=1$ and $a_{-2k}=0$.  The Jacobi-identity of  $L_{-k}, L_{-k}, L_{k}$ forces that 	\[(\lambda-\mu)p^1_{-2k,k}(\partial,\lambda+\mu)=c\lambda\mu(\lambda-\mu)=0.\] 
		for some $c\in \mathbb{C}$. It implies that 
		$p_{-2k,k}(\partial,\lambda)=0$ for each $j\geq 0$.  Then by using indcution on $i$, one is easy to see that $p_{-ik,jk}(\partial,\lambda)=0$ for each $0<i-1\leq j$. The left is obvious.
	\end{proof}
	
	\begin{lemma}{\label{l3.2}}
		Suppose that $k\in I_1 \cup I_2$ and  $Re(a_{-k})\leq Re(a_k)$. Then \begin{enumerate}
			\item[(i)]  $a_{jk}=a_{k}+(j-1)(a_{k}-2)$ for each  $j\geq 0$.
			\item[(ii)] $|k\mathbb{Z}\cap I_0|\leq 1$ and $|k\mathbb{Z}\cap I_2|\leq 1$. 
		\end{enumerate} 
	\end{lemma}
	\begin{proof}
		Since $Re(a_{-k})\leq Re(a_k)$  and $a_{k}+a_{-k}\geq 4$,  we have $Re(a_k)\geq 2$.  	By Proposition {\ref{l2}}, the lemma holds wherenever  $a_{-k}=1$. So it is sufficient to consider the case that $a_{-k}\neq 1$. Thus we may assume that $k\in I_1$.
		By Lemma \ref{l3.1}(5),  $k\mathbb{Z}\subset Supp(\mathcal{L})$ and $jk\in Supp_1(\mathcal{L})$ for each $j>0$.	
		
		$(i)$  By Lemma {\ref{l3.1}}(3) and (5), $\text{deg}\  p_{-k,jk}(\partial,\lambda)=1$ and  $\text{deg}\  p_{k,jk}(\partial,\lambda)=1$ for each $j\geq 0$.   By Proposition \ref{p3.1}(i), $a_{jk}=a_{k}+(j-1)(a_{k}-2)$ for each  $j\geq -1$.

		$(ii)$Suppose that there exists some  $sk,tk \in k\mathbb{Z}\cap I_2$. Then  $a_{-sk}=a_{-tk}=1$ by Propositions 3.1 and 3.2. Hence $a_{sk}=a_{tk}=4$.   It happens only when  $s=t$ by $(i)$. Hence  $|k\mathbb{Z}\cap I_2|\leq 1$. 
		
		Suppose that there exists some $s\in \mathbb{Z}$ such that $sk\in I_0$. Let $s_0$ be the minimal of such one. Obviously, $s_0>1$. 
		Then $(s_0-1)k\in I_1\cup I_2$.
		Suppose that $(s_0-1)k\in I_2$. Then $a_{(-s_0+1)k}=1$.
		Notice that
		\[a_{-k}+a_{(1-s_0)k}=a_{-s_0k}+\text{deg}\  p_{-k,(1-s_0)k}(\partial,\lambda)+1,\]
		\[a_{k}+a_{(s_0-1)k}=a_{s_0k}+\text{deg}\  p_{k,(s_0-1)k}(\partial,\lambda)+1.\]
		Thus $\text{deg}\  p_{-k,(1-s_0)k}(\partial,\lambda)=3$, which is impossible for $a_{(1-s_0)k}=1$ and $a_{-k}\neq 1$.  Hence  $(s_0-1)k\in I_1$.
		Thus
		\[\text{deg}\  p_{-k,(1-s_0)k}(\partial,\lambda)=a_{-k}+a_{(1-s_0)k}-1-a_{-s_0k}=a_{-k}-a_{(s_0-1)k}+a_{s_0k}=2.\]
		Hence $a_{(1-s_0)k}=1$ or $a_{-s_0}=0$. Thus $a_{(s_0-1)k}=3$ or $a_{s_0k}=3$. Hence $a_{-s_0k}\leq 0$ and $a_{-k}<2$ in both cases.  In addition, 
		\[\text{deg}\  p_{-s_0k,2s_0k}(\partial,\lambda)=a_{-s_0k}+a_{2s_0k}-1-a_{s_0k}=a_{-s_0k}+2a_{s_0k}-3-a_{s_0k}=0.\]
		Hence the  Jacobi-identity of $L_{-s_0k},L_{-s_0k}$ and $L_{2s_0k}$ implies that $p_{-s_0k,-s_0k}(\partial,\lambda)=0$. 
		By considering the Jacobi-identity of $L_{-k}$,$L_{(1-s_0)k}$ and $L_{-s_0k}$, 
		we can obtain that
		\begin{equation}{\label{bcc}}p_{(1-s_0)k,-s_0k}(\partial+\lambda,\mu)p_{-k,(1-2s_0)k}(\partial,\lambda)=p_{-k,-s_0k}(\partial+\mu,\lambda)p_{(1-s_0)k,(-s_0-1)k}(\partial,\mu)
		\end{equation}
		Notice that $\text{deg}\  p_{-k,-s_0k}(\partial+\lambda,\mu)\leq 1$. Otherwise $a_{(-1-s_0)k}+a_{(1+s_0)k}\leq 2$, which is impossible. Suppose that $\text{deg}\  p_{-k,-s_0k}(\partial+\lambda,\mu)=1$. Then $a_{(-1-s_0)k}=a_{-k}+a_{-s_0k}-2<0$ for $a_{-k}<2$. It implies that  $p_{(1-s_0)k,(-s_0-1)k}(\partial,\mu)\neq 0$. Since \[p^1_{-k,-s_0k}(\partial+\mu,\lambda)=c_{-k,k}(\partial+\mu+\frac{a_{(-s_0-1)k}}{a_{-k}-1})\lambda\] for some $c_{-k,k}\in \mathbb{C}^*$. Equation \eqref{bcc} forces that $a_{(-s_0-1)k}=a_{-k}-1$.
		It implies that $a_{-s_0k}=1$, which is a contradiction.  Hence   $\text{deg}\  p_{-k,-s_0k}(\partial,\lambda)=0$. Thus  
		$(1+s_0)k\in I_1$. Suppose that $S:=\{jk\in I_0|j>s_0\}$ is not empty. Let $s_1$ be the least integer of $S$. Clearly, $s_1>1+s_0$.  With a similar analysis as above,  one has $(s_1-1)k \in I_1$. Further,
		$a_{(s_1-1)k}=3$ or $a_{s_1k}=3$. It is contradict to the statement $(i)$. Thus $S$ is an empty set. It is equivalent to saying that 
		$|k\mathbb{Z}\cap I_0|\leq 1$.     
	\end{proof}

	\begin{p}{\label{bcp}} Suppose that $k\in I_1 \cup I_2$.  If $a_{-k}\neq 1$, then $\mathcal{L}[k]\cong CL_2(b,s)$ or $SCL_2(b,s)$ or $\mathcal{V}(s)$ for some $b,s\in \mathbb{C}$.
	\end{p}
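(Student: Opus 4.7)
The plan is to extend the argument of Proposition \ref{l2} to the case $a_{-k} \neq 1$. By swapping $k$ and $-k$ if necessary I assume $Re(a_k) \geq Re(a_{-k})$; Proposition \ref{l2} already covers $a_{-k} = 1$, so I further assume $a_{-k} \neq 1$.

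First I rule out $k \in I_2$ under these assumptions: Proposition \ref{p0}(i) forces $a_k + a_{-k} = 5$, and the degree-$2$ entries of Propositions \ref{p0} and \ref{p1} require $a_k = 1$ or $a_{-k} = 1$; under the ordering this gives $a_{-k} = 1$, a contradiction. Hence $k \in I_1$, $a_k + a_{-k} = 4$, and both $a_k, a_{-k} \neq 1$. Lemma \ref{l1}(5) then yields $k\mathbb{Z} \subset Supp(\mathcal{L})$ and $p_{ik,jk}(\partial,\lambda) \neq 0$ whenever $ij < 0$, while Lemma \ref{l1}(4) makes $\{Re(a_{ik})\}$ strictly monotonic on $\mathbb{Z}_{\geq 0}$ and $\mathbb{Z}_{\leq 0}$ separately.

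Next I split the analysis according to the action of $L_0$. In Case I, suppose $a_{ik} = 2$ for every $i \in \mathbb{Z}$. Proposition \ref{p0}(i) then forces $\deg p_{ik,jk} = 1$ with leading term $\partial + 2\lambda$; combined with the $\mathbb{Z}$-linearity $b_{ik} = ci$ from Proposition \ref{p0}(ii), substituting into (\ref{esx1}) and comparing coefficients pins down $p_{ik,jk}(\partial,\lambda) = \partial + 2\lambda + s(i-j)$ for some $s \in \mathbb{C}$, identifying $\mathcal{L}[k]$ with $\mathcal{V}(s)$.

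In Case II some $a_{ik} \neq 2$, so by the monotonicity above all $a_{ik}$ are pairwise distinct. Lemma \ref{333} gives $|k\mathbb{Z} \cap I_0| \leq 1$ and $|k\mathbb{Z} \cap I_2| \leq 1$. If both intersections are empty, Lemma \ref{l1}(3) forces $\deg p_{ik,jk} \leq 1$ for all $i,j$, so $\mathcal{L}[k]$ is quadratic. Its Novikov algebra $(\mathcal{L}[k])_N$ decomposes into pairwise distinct one-dimensional eigenspaces of (the image of) $L_0$, is graded by the two-sided unbounded set $k\mathbb{Z}$, and is simple, since $p_{-k,jk}(\partial,\lambda) \neq 0$ for every $j > 0$ forces any nonzero graded ideal to meet every graded piece. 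Proposition \ref{OP} then forces this Novikov algebra to be $A_2$ (the bounded-below $A_1$ alternative is excluded by the two-sided support and $A_3$ by the rank-one condition), and Proposition \ref{XP} determines the Gel'fand-Dorfman bracket as $[L_i, L_j] = s(i-j)L_{i+j}$, yielding $\mathcal{L}[k] \cong CL_2(b,s)$. If instead an exceptional index $m_0 k \in I_0 \cup I_2$ exists, I mimic the central-extension construction from Proposition \ref{l2} Case II: build a free $\mathbb{C}[\partial]$-module $\mathcal{T} = \bigoplus_{i} \mathbb{C}[\partial] T_i$ whose $\lambda$-brackets are obtained from those of $\mathcal{L}[k]$ by extracting the common linear factor $(\partial + \mathrm{const})$ responsible for the exceptional degrees at index $\pm m_0 k$, and identify $\mathcal{T}$ with $CL_2(b,s)$ for $2b \in \mathbb{Z}$; then $\mathcal{L}[k]$ is realised as the simple graded ideal $SCL_2(b,s)$ of $\mathcal{T}$.

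The main obstacle is this last exceptional subcase. One must set up the central extension compatibly with the non-quadratic brackets at the exceptional index, verify that the new $\lambda$-brackets on $\mathcal{T}$ satisfy the Jacobi identity and equation (\ref{esx1}), and check that $\mathcal{T}$ is genuinely isomorphic to the whole of $CL_2(b,s)$ rather than merely a subalgebra. The construction used in Proposition \ref{l2} Case II provides a template, but the present generality (with $a_k$ and $a_{-k}$ both free parameters subject only to $a_k + a_{-k} = 4$) requires more delicate bookkeeping across all pairs $(ik, jk)$, in particular where the factored-out polynomial interacts with nearby brackets and with the constraints from Lemma \ref{333}.
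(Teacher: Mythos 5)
Your overall architecture matches the paper's: reduce (via the ordering $Re(a_{-k})\le Re(a_k)$, Proposition \ref{l2} and Lemma \ref{333}) to $k\in I_1$ with $a_{-k}\neq 1$, then split according to whether all $a_{ik}=2$, whether an exceptional index in $I_0\cup I_2$ exists. Your first case (normalise the $c_{ik,jk}$, compare coefficients in the Jacobi identity, conclude $\mathcal{V}(s)$) and your $I_2$-exceptional case (central extension modelled on Case II of Proposition \ref{l2}) are essentially the paper's arguments. Where you genuinely diverge is the case ``all of $k\mathbb{Z}$ in $I_1$, some $a_{ik}\neq 2$'': you show $\mathcal{L}[k]$ is quadratic and invoke the Novikov/Gel'fand--Dorfman classification (Propositions \ref{OP} and \ref{XP}) to land on $A_2$ and hence $CL_2(b,s)$. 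The paper's written proof of Proposition \ref{bcp} does not explicitly treat this case (its three subcases are $a_k=a_{-k}=2$, $\exists\, jk\in I_2$, $\exists\, jk\in I_0$), so your argument fills a real gap, and it does so with the same technique the paper uses in Case I of Proposition \ref{l2}, so it sits naturally in the framework.

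Two problems remain. First, you fold the $I_0$-exceptional case into the central-extension/$SCL_2$ outcome. The paper handles it differently and, I think, more cleanly: when $jk\in I_0$, Lemma \ref{333} gives $\deg p_{(j-t)k,tk}=2$ for $t\neq 0,j$, so $\partial+b_{jk}$ is a common factor and the subalgebra contains only $(\partial+b_{jk})L_{jk}$ in degree $jk$; taking this as the free generator and using $p_{jk,jk}=0$, $\deg p_{jk,tk}=0$, every bracket becomes linear, so $\mathcal{L}[k]$ is itself quadratic and is identified directly with $CL_2(b,s)$ --- no central extension is needed there, and the $SCL_2$ conclusion is reserved for the $I_2$ case, where the exceptional generator's self-bracket has degree $3$ and genuinely obstructs quadraticity. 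Second, and more seriously, you explicitly leave the exceptional subcase unfinished (``requires more delicate bookkeeping''). Since that subcase is precisely where $SCL_2(b,s)$ arises, the proposal as written is not yet a complete proof: you would need to verify, as the paper does, that $-\lambda+b_{jk}$ divides $p_{jk,tk}(\partial,\lambda)$ for every $t$ (by setting $\mu=b_{jk}$ in the analogue of Equation (\ref{ex2})), write down the rescaled brackets on $\mathcal{T}$ explicitly, and check they define a quadratic Lie conformal algebra isomorphic to all of $CL_2(b,s)$ with $2b\in\mathbb{Z}$.
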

	\begin{proof}we may assume that  $Re(a_k)\geq Re(a_{-k})$. Clearly, $k\in I_1$. Set $\mathcal{J}[k]:=\bigoplus_{i \in \mathbb{Z}}\mathbb{C}[\partial]L_{ik}$.
		By Lemma 3.2, we shall distinguish the following three cases.

		{\bf Case I}.$|k\mathbb{Z}\cap I_0|=0$ and $|k\mathbb{Z}\cap I_2|=0$.
		
		By Lemma 3.1(5), for each $j>0$, we have $jk\in I_1$. Together with Lemma {\ref{l3.2}}$(i)$, one can obtain that  $a_{jk}=a_{k}+(j-1)(a_{k}-2)$ for each  $j\in \mathbb{Z}$.  Hence by Proposition 3.1(i), $\mathcal{J}[k]$ is quadratic.
		
		Suppose that $a_k=2$. Then $a_{jk}=2$ for each $j\in \mathbb{Z}$. Moreover, for each $i,j\in \mathbb{Z}$, $\text{deg}\  p_{ik,jk}(\partial,\lambda)=1$.
		Thus for each $i,j\in \mathbb{Z}$, we may assume that \[p_{ik,jk}(\partial,\lambda)=c_{ik,jk}(\partial+2\lambda+t_{ik,jk})\] for some  $c_{ik,jk}, t_{ik,jk}\in \mathbb{C}$ by Propositions \ref{p3.1}. Then for each $i,j,x\in \mathbb{Z}$,  Considering the Jacobi-identity of $L_{ik},L_{jk},L_{xk}$, we obtain
		\begin{equation}\label{bc22}
			\begin{aligned}
				&c_{(i+j)k,sk}c_{ik,jk}(\lambda-\mu+t_{ik,jk})(\partial+2(\lambda+\mu)+t_{(i+j)k,xk})\\=&c_{jk,xk}c_{ik,(j+x)k}(\partial+\lambda+2\mu+t_{jk,xk})(\partial+2\lambda+t_{ik,(j+x)k})\\&-c_{ik,xk}c_{jk,(i+x)k}(\partial+2\lambda+\mu+t_{ik,xk})(\partial+2\mu+t_{jk,(i+x)k}).
			\end{aligned}
		\end{equation}
		The coefficients before $\lambda^2$  in Equation (\ref{bc22}) require that
		\begin{equation}{\label{bdc}} c_{(i+j)k,xk}c_{ik,jk}=c_{jk,xk}c_{ik,(j+x)k}.
		\end{equation}
		Since 	$\mathcal{L}[k]$ is generated $L_{k}$,	$L_{-k}$, we can assume that \[c_{k,jk}=c_{-k,-jk}=c_{-k,k}=1.\] for each $j\geq 0$. 
		By letting $i=1$ and $j=-1$ in Equation \eqref{bdc}, we can find that
		\[1=c_{k,xk}c_{-k,(1+x)k}.\]
		for each $x\in \mathbb{Z}$. Hence
		\[c_{k,xk}=c_{-k,xk}=1\]
		for each $x\in \mathbb{Z}$. Then plugging $i=1$ into Equation \eqref{bdc} and using induction on $j$, one can see that $c_{jk,xk}=1$ for each $j,x\in \mathbb{Z}$.			
		Now  coefficients of $\partial$ of Equation \eqref{bc22} imply that
		\begin{equation}\label{bc11}
			t_{ik,jk}=t_{jk,xk}+t_{ik,(j+x)k}-t_{ik,xk}-t_{jk,(i+x)k}.
		\end{equation}	
		By Proposition {\ref{p3.1}}(ii),  $t_{ik,0}=-t_{0,ik}=-b_{ik}=-ib_{k}$ for each $i\in \mathbb{Z}$.    Take $x=0$ in  Equation (\ref{bc11}), \[       t_{jk,ik}=t_{jk,0}-t_{ik,0}=-(j-i)b_{k}.\]    One can check directly that  $\mathcal{L}[k]\cong  \mathcal{V}[-b_k]$.
		
		Next, we shall deal with the case that $a_{k}\neq 2$. In this case, the values of $a_{jk}$ are different for each $j\in \mathbb{Z}$. Hence the corresponding Novikov algebra $(\mathcal{J}[k])_N$ is the direct sum of eigenspace with distinct eigenvalues of  $L_0$. Hence any ideal of $(\mathcal{J}[k])_N$ must be graded. Notice that for each $i\in \mathbb{Z}$, either $L_{-k}\circ L_{jk}\neq 0$ or $L_{jk}\circ L_{-k}\neq 0$. Hence $(\mathcal{J}[k])_N$ is simple. Consequently,  $\mathcal{J}[k]\cong CL_2(u,s)$ for some  $u,s\in \mathbb{C}$. 
		Clearly, if $2u\not\in \mathbb{Z}$, then $\mathcal{L}[k]=\mathcal{J}[k]$. Suppose that  $2u\in \mathbb{Z}$. Then $a_{-2uk}=0$. Since $\text{deg}\ p_{ik,(-i-2u)k}=1$ for each $i\in \mathbb{Z}$, 
		one can see that  $L_{-2uk}\not\in \mathcal{L}[k]$ but $(\partial+b_{-2uk})L_{-2uk}\in \mathcal{L}[k]$.  Hence
		\[\mathcal{L}[k]=\bigoplus_{i\neq -2b} \mathbb{C}[\partial]L_{ik}\oplus \mathbb{C}[\partial](\partial+b_{-2k})L_{-2k}\]
		is a peoper ideal of  $\mathcal{J}[k]$.  Hence, in this case, $\mathcal{L}[k]\cong SCL_2(u,s)$.
		
		{\bf Case II}. $|k\mathbb{Z}\cap I_0|=0$ and $|k\mathbb{Z}\cap I_2|=1$.

		Assume that $s_1k\in I_2$ for some $s_1>0$. Then $a_{-s_1k}=1$.  Together with Lemma {\ref{l3.2}}(i), one can obtain that  $a_{jk}=a_{k}+(j-1)(a_{k}-2)$ for each  $j\in \mathbb{Z}$ and $j\neq -s_1$. Thus if $p_{ik,jk}(\partial,\lambda)\neq 0$ for some
		$i\leq j$ , then
		\begin{align*}
			\begin{split} 
				\text{deg}\  p_{ik,jk}(\partial,\lambda)=\left\{ 
				\begin{array}{ll}
					0,& i+j=-s_1\  \text{and}\  j\neq 0 ,\\
					2,&i=-s_1\  \text{and}\  j\neq 0,\ -s_1,\\
					3,&i=j=-s_1,\\
					1,&else.
				\end{array}
				\right.
			\end{split}
		\end{align*}
		Then using a similar proof as in (2) of Proposition \ref{l2}, one can see that $\mathcal{L}[k]\cong SCL_2(b,s)$ for some $2b\in \mathbb{Z}$ and $s \in \mathbb{C}$.

		{\bf Case III}. $|k\mathbb{Z}\cap I_0|=1$.

		Assume that $s_0k\in I_0$ for some $s_0>0$. 
		Notice that for large enough $t_0$, $Re(a_{t_0k})>4$ and $t_0k\in I_1$. Thus $(s_0+t_0)k\in I_1$ and $Re(a_{(-s_0-t_0)k})<0$. Thus $p_{t_0k,(-s_0-t_0)k}(\partial,\lambda)\neq 0$. Hence
		\[\text{deg}\  p_{t_0k,(-s_0-t_0)k}=a_{t_0k}+a_{(-s_0-t_0)k}-a_{-s_0k}-1=
		a_{t_0k}-a_{(s_0+t_0)k}+a_{s_0k}=2.\]
		It forces that $a_{-s_0k}=0$ and $a_{s_0k}=3$. Since $a_{(-s_0-t)k}\geq  3-a_{(s_0+t)k}$ for each $t\in \mathbb{Z}$, 
		\[\text{deg}\  p_{tk,(-s_0-t)k}=a_{tk}+a_{(-s_0-t)k}-a_{-s_0k}-1\geq
		a_{tk}+(3-a_{(s_0+t)k})-(3-a_{s_0k})-1=1.\]
		Notice that for each $t_0>t>0$, $a_{-t}\geq 4-a_t>1$. 
		Hence for each $t\in \mathbb{Z}$, either $t$ or $-t_0-t$ is not equal to $1$. Hence by  Equation (\ref{fe}), 
		one can see that $\partial+b_{-s_0k}$ is a  factor of each  $ p_{(-s_0-t)k,tk}(\partial,\lambda)$ for each $t\in \mathbb{Z}$. It implies that $L_{-s_0k}\not\in \mathcal{L}[k]$ but $(\partial+b_{-s_0k})L_{-s_0k}\in \mathcal{L}[k]$. Hence \[\mathcal{L}[k]=\bigoplus_{i\neq -s_0}\mathbb{C}[\partial]L_{ik}\oplus \mathbb{C}[\partial](\partial+b_{-s_0k})L_{-s_0k}.\]
		Hence $\mathcal{L}[k]$ admits a $\mathbb{C}[\partial]$-basis $\{L'_{i}, i\in \mathbb{Z}\}$, where $L'_{-s_0}=(\partial+b_{-s_0k})L_{-s_0k}$ and $L'_{j}=L_{jk}$ for other $j\neq -s_0$. By considering the structure constants in terms of the $\mathbb{C}[\partial]$-basis $\{L'_{i}, i\in \mathbb{Z}\}$, {\bf Case III} can be reduced to {\bf Case I} for $|k\mathbb{Z}\cap I_2|=0$. In addition, {\bf Case III} can be reduced to {\bf Case II} for $|k\mathbb{Z}\cap I_2|=1$. 			 
		
	\end{proof}

	Next, we shall deal with the case that $k\in I_0$.
	
	\begin{p} Suppose that $k\in I_0$. If $Re(a_{-k})\leq Re(a_{k})$, then we have $p_{-k,-k}(\partial,\lambda)=0$. \end{p}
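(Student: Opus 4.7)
The argument is by contradiction: suppose $p_{-k,-k}(\partial,\lambda)\ne 0$. Skew-symmetry of the bracket $[{L_{-k}}_\lambda L_{-k}]$ translates to $p_{-k,-k}(\partial,\lambda)=-p_{-k,-k}(\partial,-\lambda-\partial)$, and evaluating at $\lambda=-\partial/2$ shows $(\partial+2\lambda)\mid p_{-k,-k}$; in particular $\deg p_{-k,-k}\ge 1$. The key identity comes from applying the Jacobi identity to $(L_k,L_{-k},L_{-k})$. Because $k\in I_0$ makes $p_{k,-k}(\partial,\lambda)=c_{k,-k}$ constant, the usual bookkeeping with conformal sesquilinearity (together with the skew-symmetry expression $p_{-k,0}(\partial,\mu)=-c((1-a_{-k})\partial-a_{-k}\mu+b_{-k})$) collapses the identity to
\[
p_{-k,-k}(\partial+\lambda,\mu)\,p_{k,-2k}(\partial,\lambda)=c\,c_{k,-k}\,a_{-k}\,(\partial+\lambda+2\mu),
\]
which I will refer to as $(\star)$.

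If $a_{-k}=0$, the right-hand side of $(\star)$ vanishes, so $p_{k,-2k}=0$; Lemma \ref{l1}(2) then forces $a_{-2k}\in\{0,1\}$, but Proposition \ref{p0}(i) applied to $(-k,-k)$ gives $a_{-2k}=2a_{-k}-1-\deg p_{-k,-k}\le -2$, a contradiction. So we may assume $a_{-k}\ne 0$. Comparing total degrees in $(\star)$ then forces $\deg p_{-k,-k}+\deg p_{k,-2k}=1$; with $\deg p_{-k,-k}\ge 1$ this gives $\deg p_{-k,-k}=1$ and $p_{k,-2k}=\beta\in\mathbb{C}^*$. Divisibility by $\partial+2\lambda$ pins $p_{-k,-k}=\alpha(\partial+2\lambda)$ for some $\alpha\in\mathbb{C}^*$, and comparison with the leading-term lists of Propositions \ref{p0} and \ref{p1} forces $a_{-k}\ne 1$ together with $a_{-2k}=2a_{-k}-2$.

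To finish the case $a_{-k}\ne 0,1$, I would apply the Jacobi identity a second time, for example to $(L_{-k},L_k,L_{-2k})$, obtaining an analogous relation $p_{-k,-2k}(\partial+\mu,\lambda)\,p_{k,-3k}(\partial,\mu)=c\,c_{k,-k}\bigl[(a_{-k}+1)\partial+(4a_{-k}-2)\lambda+(2a_{-k}-2)\mu+2b_{-k}\bigr]$; matching the $\partial$- and $\mu$-coefficients reduces $a_{-k}$ to at most two admissible scalars. Running the symmetric analysis on $(L_{-k},L_k,L_k)$ yields the mirror identity controlling $p_{k,k}$ and $p_{-k,2k}$, which, using $a_k+a_{-k}=3$, forces either $a_k\in\{1/2,3\}$ or $p_{k,k}=0$ with $a_k\in\{0,1\}$ (via Lemma \ref{l1}(2)). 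Combining both sets of constraints, no value of $a_{-k}$ satisfies all of them simultaneously, completing the contradiction. The hardest step is this coordination: $(\star)$ alone is consistent with $p_{-k,-k}\ne 0$, and the contradiction only emerges after imposing a second Jacobi relation and exploiting the symmetry between the $+k$ and $-k$ sides, exactly in the spirit of the Case III argument in Proposition \ref{l2}.
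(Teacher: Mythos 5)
Your opening is correct and coincides with the paper's first step: skew-symmetry gives $(\partial+2\lambda)\mid p_{-k,-k}$, the Jacobi identity on $L_{\pm k},L_{-k}$ yields the relation $(\star)$ (the paper derives the same relation in the arrangement $c_{-k,-k}(\lambda-\mu)p_{-2k,k}(\partial,\lambda+\mu)=c_{-k,k}\tfrac{a_{-k}}{a_{-k}-1}(\lambda-\mu)$), the case $a_{-k}=0$ is excluded exactly as you say via Lemma 3.4(2) and $a_{-2k}=2a_{-k}-2$, and one concludes $\deg p_{-k,-k}=1$, $p_{k,-2k}\in\mathbb{C}^*$. Up to this point you and the paper agree.

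The gap is in the finish. You assert that one or two further Jacobi identities among $L_{\pm k},L_{\pm 2k}$ pin $a_{-k}$ to finitely many values and that ``no value of $a_{-k}$ satisfies all of them simultaneously,'' but you never exhibit the incompatible constraints, and the paper's own computations show that this cannot work: all the low-index Jacobi identities are mutually consistent with $p_{-k,-k}\neq 0$. Indeed the paper derives from them a coherent partial structure ($p_{\pm 2k,\mp k},p_{2k,k},p_{-2k,\pm 3k}\in\mathbb{C}^*$, $p_{2k,-2k}=p_{-2k,-2k}=0$, $a_{\pm 3k}=3a_{\pm k}-3$, $3k\in I_0$, $a_{\pm 4k}=4a_{\pm k}-4$, $b_k=0$) with no contradiction at that level. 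The contradiction only appears after establishing $4k\in I_1$, invoking Proposition 3.6 to identify $\mathcal{L}[4k]$ with one of $\mathcal{V}(0)$, $CL_2(b,0)$, $SCL_2(b,0)$, $CL_1(0)$, and then running a four-case analysis in which the cosets $\mathcal{J}_i=\bigoplus_{t}\mathcal{L}_{i+4tk}$ are treated as free intermediate series modules over $\mathcal{L}[4k]$ and explicit structure constants ($c_{k,2k}$, $c_{k,3k}$, $c_{3k,3k}$, etc.) are shown to be forced to vanish. That global, module-theoretic step is the actual content of the proof, and your proposal is missing it; as written, the argument does not close.
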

	\begin{proof}
		Suppose that $p_{-k,-k}(\partial,\lambda)\neq 0$. Consider the Jacobi-identity of $L_{-k},L_{-k}$ and $L_{k}$. If $a_{-k}=1$, then  we have
		\[p_{-k,-k}(-\lambda-\mu,\lambda)p_{-2k,k}(\partial,\lambda+\mu)=t_{-k,k}(\lambda-\mu),\]
		for some $t_{-k,k} \in \mathbb{C}^*$. Thus $p_{-2k,k}(\partial,\lambda)\in \mathbb{C}^*$ and $deg p_{-k,-k}(\partial,\lambda)=1$.
		If  $a_{-k}\neq 1$, then   
		\[ p_{-k,-k}(-\lambda-\mu,\lambda)p_{-2k,k}(\partial,\lambda+\mu)=c_{-k,k}\frac{a_{-k}}{a_{-k}-1}(\lambda-\mu)\]
		for some $c_{-k,k} \in \mathbb{C}$. Since $\text{deg}\  p_{-k,-k}(\partial,\lambda)\geq 1$, it  implies that $p_{-2k,k}(\partial,\lambda)=0$ if and only if  $a_{-k}=0$. However, $a_{-k}=0$ means that $a_{-2k}=-2$, which is contradict to Lemma 3.1(2).  Hence $p_{-2k,k}(\partial,\lambda)\in \mathbb{C}^*$ and $deg p_{-k,-k}(\partial,\lambda)=1$ in both cases. Similarly, we can see that  $p_{2k,-k}(\partial,\lambda)\in \mathbb{C}^*$.   Thus $a_{2k}+a_{-2k}=2a_{k}+2a_{-k}-4=2$. Hence $p_{2k,-2k}(\partial,\lambda)=0$. Thus
		\[ [{L_{-2k}}_\lambda [{L_{2k}}_\mu L_k]]=[{L_{2k}}_\mu [{L_{-2k}}_\lambda L_k]]\in \mathbb{C}^*L_k. \]
		Hence $p_{2k,k}(\partial,\lambda), p_{-2k,3k}(\partial,\lambda)\in \mathbb{C}^*$.   Similarly, we have $p_{-2k,-k}(\partial,\lambda)\in \mathbb{C}^*$.   Hence $a_{3k}=3a_{k}-3$ and $a_{-3k}=3a_{-k}-3$.
		Moreover, from the Jacobi-identity of $L_{-k},L_{-2k}$ and $L_{3k}$, one can see that  
		\[ [{L_{-k}}_\lambda [{L_{-2k}}_\mu L_{3k}]]=[{[{L_{-k}}_\lambda L_{-2k}]}_{\lambda+\mu} L_{3k}]]\in \mathbb{C}^*L_0. \]
		Hence $3k\in I_0$. 	Since $Re(a_k)\geq \frac{3}{2}$ and  $a_{3k}=3a_{k}-3$, one can see that $p_{k,3k}(\partial,\lambda)\neq 0$ and $\text{deg}\  p_{k,3k}(\partial,\lambda)\leq 1$ by Propositions 3.1 and 3.2.  Let us consider the following Jacobi-identity 
		\begin{equation}\label{eess}  [[{L_{k}}_\lambda L_{k}]]_{\lambda+\mu} L_{2k}]=[{L_k}_\lambda [{L_k}_\mu L_{2k}]]-[{L_k}_\mu [{L_k}_\lambda L_{2k}]]. 
		\end{equation}
		Since $p_{k,2k}(\partial,\lambda)=c_{k,2k}$ for some  $c_{k,2k}\in \mathbb{C}^*$,
		one can see that  \[p_{k,k}(-\lambda-\mu,\lambda)p_{2k,2k}(\partial,\lambda+\mu)=c_{k,2k}p_{k,3k}(\partial,\lambda)-c_{k,2k}p_{k,3k}(\partial,\mu).\] 
		Hence $p_{2k,2k}(\partial,\lambda)=0$ and $p_{k,3k}(\partial,\lambda)\in \mathbb{C}^*$. Hence $a_{4k}=4a_k-4$.

		Let us first consider the case that $4k\in I_1$ and $a_{-k}\neq 1$.
		Notice that \[p_{-k,k}(\partial,\lambda),\ p_{-k,2k}(\partial,\lambda),\ p_{k,-2k}(\partial,\lambda)\in \mathbb{C}^*.\] Hence $b_{k}=0$. Otherwise, $L^0(\mathcal{L}[k])$ shall be isomorphic to centerless Virasoro Lie algebra  by \cite[Theorem 1]{Ka}, which is impossible because $p_{2k,-2k}(\partial,\lambda)=0$. 
		Hence, by Proposition {\ref{bcp}}, $\mathcal{L}[4k]$ is either isomorphic to $\mathcal{V}(0)$ or $CL_2(b,0)$ or $SCL_2(b,0)$  for some $b\in \mathbb{C}^*$. Since $\mathcal{L}[4k]$ is a subalgebra of $\mathcal{L}[k]$ and $Re(a_{-4k})\leq Re(a_{4k})$, we can see that $p_{k,jk}(\partial,\lambda)\neq 0$ for each $j\geq 0$.    
		
		Let $\mathcal{J}_i=\bigoplus_{t\in \mathbb{Z}}\mathcal{L}_{i+4tk}$ for $i=1,2,3$. Then for each $i$, $\mathcal{J}_i$ can be regarded as a free intermediate series module of $\mathcal{L}[4k]$ concerning the adjoint action. We shall distinguish the following cases.

		\noindent {\bf Case I}  $\mathcal{L}[4k]\cong \mathcal{V}(0)$.
		In this case,  we have the following table
		\begin{table}[htb]   
			\begin{center}   
				\begin{tabular}{|c|c|c|c|c|c|c|c|c|c|}   
					\hline
					t  &-4  &-3 &-2 &-1 &0 &1 &2 &3 &4 \\ 
					\hline
					$a_{tk}$ &2 & $\frac{3}{2}$&  1  &$\frac{3}{2}$ & 2 & $\frac{3}{2}$ & 1 &  $\frac{3}{2}$ &2  \\ 
					\hline   
				\end{tabular}   
			\end{center}   
		\end{table}

		We  can assume that $p_{4ik,4jk}(\partial,\lambda)=\partial+2\lambda$ for each $i,j\in \mathbb{Z}$. Furthermore, from \cite{WCY}, we may assume that 
		\[ [{L_{4tk}}_\lambda L_{1+4jk}]=(\partial+\frac{3}{2}\lambda)L_{1+4(t+j)k},\]
		\[           [{L_{4tk}}_\lambda L_{3+4jk}]=(\partial+\frac{3}{2}\lambda)L_{3+4(t+j)k}, \]
		for each $t,j\in \mathbb{Z}$.
		In particular, we can see that $a_{1+4tk}=a_{3+4tk}=\frac{3}{2}$ for each $t\in \mathbb{Z}$. Hence $a_{2+4tk}=1$ for each $t\in \mathbb{Z}$.  In addition,
		\[[{L_{4tk}}_\lambda L_{2+4jk}]=(\partial+\lambda)L_{2+4(t+j)k}\]
		for each $t,j\in \mathbb{Z}$.
		Considering the Jacobi-identity of $L_{k},L_{2k},L_{4k}$, we get 
		\[  \frac{1}{2}p_{k,2k}(-\lambda-\mu,\lambda)(\partial+3\lambda+3\mu)=p_{k, 6k}(\partial,\lambda)\mu-\frac{1}{2}p_{2k, 5k}(\partial,\mu)(\partial+3\lambda+\mu).\]
		Hence  \[ p_{k,2k}(\partial,\lambda)=p_{k, 6k}(\partial,\lambda)=-p_{2k, 5k}(\partial,\lambda)\in \mathbb{C}^*.\]
		Besides considering the Jacobi-identity of $L_k,L_{2k},L_{3k}$,  
		we have \[ p_{k,2k}(-\lambda-\mu,\lambda)p_{3k,3k}(\partial,\lambda+\mu)=p_{2k,3k}(\partial+\lambda,\mu)p_{k,5k}(\partial,\lambda )-p_{k,3k}(\partial+\mu,\lambda)\mu.\]
		Since \[p^1_{3k,3k}(\partial,\lambda)=c_{3k,3k}(\partial+2\lambda+2\mu),\]
		and 
		\[p^1_{k,5k}(\partial,\lambda )=c_{k,5k}(\partial+2\lambda),\]
		for some $c_{3k,3k}$, $c_{k,5k}\in \mathbb{C}^*$, 
		one can see that
		\[ 2p_{k,2k}(\partial,\lambda)c_{3k,3k}=-p_{k,3k}(\partial,\lambda).\]
		On the other hand,  from the Jacobi-identity of $L_k, L_{3k}, L_{3k}$, we have
		\[   p_{k,3k}(-\lambda-\mu,\lambda)(\partial+\frac{3}{2}\lambda+\frac{3}{2}\mu)=c_{3k,3k}p_{k,6k}(\partial,\lambda)(\partial+2\mu+\lambda)-\frac{1}{2}p_{k,3k}(\partial+\mu,\lambda)(\partial+3\mu).  \]
		Hence 
		\[  \frac{3}{2}p_{k,3k}(\partial,\lambda)=c_{3k,3k}p_{k,6k}(\partial,\lambda)=c_{3k,3k}p_{k,2k}(\partial,\lambda)=-\frac{1}{2}p_{k,3k}(\partial,\lambda).\]
		This implies that $p_{k,3k}(\partial,\lambda)=0$, which is impossible.

		\noindent {\bf Case II}  $\mathcal{L}[4k]\cong CL_2(b,0)$ for some nonzero $b\in \mathbb{C}$. Hence for each $i,j\in \mathbb{Z}$, we may assume that 
		\[p_{4ik,4jk}(\partial,\lambda)=(4i+b)\partial+(4i+4j+2b)\lambda.\] In addition, $a_{4ik}=\frac{4ik+2b}{b}$ for each $i\in \mathbb{Z}$.

		From \cite[Theorem 6.19]{HPC},  there exists some $N>0$ and for each $i>N$,
		we can assume that  \[  p^1_{4ik,k}(\partial,\lambda)=(4ik+b)\partial+ba_{(4i+1)k}\lambda\]
		\[p^1_{4ik,3k}(\partial,\lambda)=(4ik+b)\partial+ba_{(4i+3)k}\lambda\]
		\[p^1_{4ik,2k}(\partial,\lambda)=(4ik+b)\partial+ba_{(4i+2)k}\lambda.\]
		On the other hand, we have
		\[a_{(4i+3)k}=a_{4ik}+a_{3k}-2=a_{4ik}+\frac{3}{4}a_{4k}-2=\frac{3k+4ik+\frac{3}{2}b}{b},\]
		and
		\[a_{(4i+1)k}=a_{4ik}+a_{k}-2=a_{4ik}+\frac{a_{4k}+4}{4}-2=\frac{k+4ik+\frac{3}{2}b}{b},\]
		and 
		\[a_{(4i+2)k}=a_{4ik}+a_{2k}-2=a_{4ik}+\frac{a_{4k}}{2}-2=\frac{2k+4ik+b}{b}.\]
		By Proposition 3.1(i), we can see that for each $i>N$,  $\text{deg}  p_{k,4ik+k}(\partial,\lambda)=1$ and \[\text{deg}\  p_{k,4ik+2k}(\partial,\lambda)=\text{deg}\   p_{k,4ik+3k}(\partial,\lambda)=0,\      p_{4ik+2k,4ik+3k}(\partial,\lambda)\in \mathbb{C}.\ \ \] 
		Hence the Jacobi-identity of $L_k,L_{(4i+2)k},L_{4(i+1)k}$ leads to 
		\begin{equation}{\label{eq3.11}}
			\begin{aligned}
				&p^1_{k,(4i+2)k}(-\lambda-\mu,\lambda)((4ik+3k+\frac{1}{2}b)\partial+(8ik+7k+\frac{3}{2}b)(\lambda+\mu))\\=&p^1_{k, (8i+6)k}(\partial,\lambda)((4ik+2k)(\partial+\lambda)+(8ik+6k+b)\mu)\\&-p^1_{(4i+2)k, (4i+5)k}(\partial,\mu)((k+\frac{1}{2}b)(\partial+\mu)+(4ik+5k+\frac{3}{2}b)\lambda).
			\end{aligned}
		\end{equation}
		Equation {\eqref{eq3.11}} forces  that \[ p_{k,(4i+2)k}(\partial,\lambda)=p_{k, (8i+6)k}(\partial,\lambda)=-p_{(4i+2)k, (4i+5)k}(\partial,\lambda)\in \mathbb{C}^*.\]
		Besides considering the Jacobi-identity of $L_k,L_{4ik+2k},L_{4ik+3k}$,  
		we have
		\begin{equation}\label{eq3.13}
			\begin{aligned}
				&p^1_{k,4ik+2k}(-\lambda-\mu,\lambda)p^1_{4ik+3k,4ik+3k}(\partial,\lambda+\mu)\\=&p^1_{4ik+2k,4ik+3k}(\partial+\lambda,\mu)p^1_{k,8ik+5k}(\partial,\lambda )\\&-p^1_{k,4ik+3k}(\partial+\mu,\lambda)((4ik+2k)\partial+(8ik+6k+b)\mu)).
			\end{aligned}
		\end{equation}
		Since \[p^1_{4ik+3k,4ik+3k}(\partial,\lambda)=c_{4i+3k,4i+3k}(\partial+2\lambda+2\mu),\]
		for some $c_{4ik+3k,4ik+3k}\in \mathbb{C}^*$,
		The coefficients before $\mu$ of the Equation {\eqref{eq3.13}} forces that 
		one can see that
		\[ 2p^1_{k,4ik+2k}(\partial,\lambda)c_{4ik+3k,4ik+3k}=-(8ik+6k+b)p^1_{k,4ik+3k}(\partial,\lambda)\in \mathbb{C}^*.\]
		On the other hand,  from the Jacobi-identity of $L_k, L_{4ik+3k}, L_{4ik+3k}$, we have
		\begin{equation}{\label{eq3.12}}
			\begin{aligned}   &p^1_{k,4ik+3k}(-\lambda-\mu,\lambda)((4ik+4k+b)\partial+(8ik+7k+\frac{3}{2}b)(\lambda+\mu))\\=&c_{4ik+3k,4ik+3k}p^1_{k,8ik+6k}(\partial,\lambda)(\partial+2\mu+\lambda)\\&-p^1_{k,4ik+3k}(\partial+\mu,\lambda)((3ik+\frac{1}{2}b)\partial+(8ik+7k+\frac{3}{2}b)\mu).  
			\end{aligned}  
		\end{equation}
		The coefficients before $\lambda$ of the Equation {\eqref{eq3.12}} forces that 
		\begin{equation*}
			\begin{aligned}
				&(8ik+7k+\frac{3}{2}b)p^1_{k,4i+3k}(\partial,\lambda)=c_{4ik+3k,4ik+3k}p^1_{k,8ik+6k}(\partial,\lambda)\\=&c_{4ik+3k,4ik+3k}p^1_{k,4ik+2k}(\partial,\lambda)=-\frac{1}{2}(8ik+6k+b)p^1_{k,4ik+3k}(\partial,\lambda).
			\end{aligned}
		\end{equation*}
		Hence $p_{k,4ik+3k}(\partial,\lambda)=0$, which is impossible.

		\noindent {\bf Case III}, $\mathcal{L}[4k]\cong SCL_2(b,0)$. Then there exits some $jk\in I_2$ with $a_{-jk}=1$ and $a_{jk}=4$. In addition, 
		\[p^1_{-jk,jk}(\partial,\lambda)=\lambda(\partial-2\lambda).\]  Since $\mathcal{L}[4k]$ is a subalgebra of $\mathcal{L}[k]$, we have $p_{-k,-ik}(\partial,\lambda)\leq 0$ and $p_{k,ik}(\partial,\lambda)\leq 0$ for each $i\geq 0$. Notice that if $p_{-k,jk}(\partial,\lambda)\neq 0$, then  $\text{deg} p_{-k,jk}(\partial,\lambda)\leq 1$ by Lemma 3.1(2).  Consequently,  we can  also obtain a contradiction from the coefficients before the $\lambda^2$ for 
		the Jacobi-identity of $L_{-k}$,$L_{(-j+1)k}$ and $L_{jk}$.

		Next, Let us deal with the case that $4k\not\in I_1$. Notice that
		\[a_{4k}+a_{-4k}\leq a_{k}+a_{3k}+a_{-k}+a_{-3k}-2\leq 4.\]
		Hence $p_{-4k,4k}(\partial,\lambda)=0$ or $4k\in I_0$. If $4k\in I_0$, then $\text{deg}\  p_{-k,-3k}(\partial,\lambda)=1$. 
		
		By considering the Jacobi-identity of $L_{-k},L_{-k}$ and $L_{-2k}$, we can find that $a_{-4k}=0$. Hence \[ 0=a_{-4k}=a_{-k}+a_{-3k}-2=4a_{-k}-5.\]
		It implies that $a_{-k}=\frac{5}{4}$. Now suppose that $p_{-4k,4k}(\partial,\lambda)=0$. Then the Jacobi-identity of $L_{-k},L_{-3k}$ and $L_{4k}$ implies that 
		\begin{equation}{\label{zh}}c_{-k,k}p^1_{-3k,4k}(\partial+\lambda,\mu)=p^1_{-k,4k}(\partial+\mu,\lambda)c_{-3k,k}
		\end{equation}
		Since $Re(a_{k})\geq \frac{3}{2}$, $Re(a_{4k})\geq 2$. Hence $p^1_{-k,4k}(\partial,\lambda)\neq 0$. Since $p_{k,3k}(\partial,\lambda)\in \mathbb{C}^*$, $\text{deg}\   p^1_{-k,4k}(\partial,\lambda)=1$. Hence Equation \eqref{zh} forces that  $a_{3k}=a_{-k}-1$, which is impossible. In conclusion, if $4k\not\in I_1$,
		then $a_{-k}=\frac{5}{4}$. In  this case, we have
		$a_{-3k}=\frac{3}{4}$. Since $3k\in I_0$ and $p_{-3k,-3k}(\partial,\lambda)\neq 0$, $12k\in I_1$ and $a_{-12k}=4a_{-3k}-4=-1$.   Then investigating the structure of $\mathcal{L}[3k]$ as in {\bf Case I}, {\bf Case II}, {\bf Case III}, we can also get a contradiction.

		As for the case that
		$4k\in I_1$ and $a_{-4k}=1$, one can see that  $a_{-k}=\frac{5}{4}$. Thus it can be dealt  as the case  $4k\not\in I_1$. We can also get a contradiction.  In conlusion, $p_{-k,-k}(\partial,\lambda)=0$.   
	\end{proof}
	\begin{corollary}If $k\in I_0$ and $Re(a_{-k})\leq Re(a_k)$, then $a_{-k}=0$. Moreover, $\mathcal{L}[k]$ is not simple and has a proper ideal which is isomorphic to $CL_1(s)$. In particular, $a_{nk}=n+2$ for each $n\geq 0$. \end{corollary}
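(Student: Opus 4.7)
My plan is to exploit $p_{-k,-k}=0$ (just proved) to pin down $a_{\pm k}$, inductively determine $a_{nk}$ for $n\geq 0$, and then exhibit a proper ideal together with an explicit isomorphism to $CL_1(s)$. First, $p_{-k,-k}=0$ together with Lemma \ref{l1}(2) gives $a_{-k}\in\{0,1\}$, and the alternative $a_{-k}=1$ with $k\in I_0$ was already excluded in Case III of the proof of Proposition \ref{l2}. Hence $a_{-k}=0$, and $a_k=3$ by Proposition \ref{p0}(i) applied to the constant $p_{k,-k}$.

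Next, I prove $a_{nk}=n+2$ by induction on $n\geq 0$, the cases $n=0,1$ being done. In the inductive step, $a_{nk}=n+2\geq 2$ forces $p_{k,nk}\neq 0$ by Lemma \ref{l1}(2), and Lemma \ref{l1}(3) with $i=0$ gives $\deg p_{k,nk}+\deg p_{(n+1)k,-k}=1$. The crucial task is to rule out $\deg p_{k,nk}=0$, which would force $a_{(n+1)k}=n+4$ and $\deg p_{-k,(n+1)k}=1$ with leading part proportional to $\partial-(n+2)\lambda$ by Proposition \ref{p0}. To get a contradiction, I apply the Jacobi identity to $L_{-k},L_{-k},L_{(n+1)k}$: because $[{L_{-k}}_\lambda L_{-k}]=0$, it reduces to
\[
p_{-k,(n+1)k}(\partial+\lambda,\mu)\,p_{-k,nk}(\partial,\lambda)=p_{-k,(n+1)k}(\partial+\mu,\lambda)\,p_{-k,nk}(\partial,\mu).
\]
By the inductive hypothesis and Proposition \ref{p0}(i), $p_{-k,nk}$ is a nonzero constant, so the identity becomes the symmetry $p_{-k,(n+1)k}(\partial+\lambda,\mu)=p_{-k,(n+1)k}(\partial+\mu,\lambda)$. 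For a linear polynomial $A\partial+B\lambda+C$ this forces $A=B$; but the prescribed ratio $A/B=-1/(n+2)$ is incompatible, so the leading coefficient must vanish, contradicting $\deg p_{-k,(n+1)k}=1$. Thus $\deg p_{k,nk}=1$ and $a_{(n+1)k}=n+3$.

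With all $a_{nk}=n+2$ established, $p_{k,nk}$ has a nonzero scalar $\lambda$-coefficient, so $L_{nk}\in\mathcal{L}[k]$ for every $n\geq 0$, while no index below $-k$ can be generated (the only bracket landing there is $[{L_{-k}}_\lambda L_{-k}]=0$). Hence $\mathcal{L}[k]=\bigoplus_{n\geq -1}\mathbb{C}[\partial]L_{nk}$. Set $s:=b_k$; by Proposition \ref{p0}(ii), $b_{nk}=nb_k$ and $b_{-k}=-s$. I then define
\[
J:=\bigoplus_{n\geq 0}\mathbb{C}[\partial]L_{nk}\ \oplus\ \mathbb{C}[\partial](\partial-s)L_{-k}
\]
and check the four bracket types: positive-positive brackets land in positive grades; $[{L_{-k}}_\lambda L_{nk}]$ is a constant multiple of $L_{(n-1)k}\in J$ for $n\geq 1$ and equals $-(\partial-s)L_{-k}\in J$ for $n=0$; brackets against $(\partial-s)L_{-k}$ pick up the factor $(\partial+\lambda-s)$, which combines with $p_{nk,-k}$ (a constant for $n\geq 1$) or with $p_{0,-k}=\partial-s$ to stay in $J$; and $[{L_{-k}}_\lambda (\partial-s)L_{-k}]=0$. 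So $J$ is an ideal, and it is proper since $L_{-k}\notin J$. Finally, the map $\varphi\colon CL_1(s)\to J$ given by $L_n\mapsto L_{nk}$ for $n\geq 0$ and $L_{-1}\mapsto (\partial-s)L_{-k}$ is verified to preserve the $\lambda$-bracket in each of the three cases $n,m\geq 0$; $n=-1,m\geq 0$; and $n=m=-1$ by direct comparison with the defining formula of $CL_1(s)$, using $b_{nk}=nb_k$ and the structure constants fixed by the induction.

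The main obstacle is the inductive step: the Jacobi argument using $p_{-k,-k}=0$ to rule out the anomalous degree-$0$ solution is what locks in the pattern $a_{nk}=n+2$ uniformly. Once that is secured, the remaining work is essentially careful bracket bookkeeping to identify $J$ as a proper ideal and to check the isomorphism $J\cong CL_1(s)$.
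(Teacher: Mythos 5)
Your reduction to $a_{-k}=0$, your inductive determination of $a_{nk}=n+2$, and your construction of the ideal $J$ are all sound, and the induction is a genuinely different (and more self-contained) route than the paper's: the paper obtains $a_{nk}=n+2$ only \emph{a posteriori}, from the isomorphism $\mathcal{J}\cong CL_1(s)$, which it gets by regrading the bottom piece by $(\partial+b_{-k})L_{-k}$ (so that the new ``$a_{-k}$'' equals $1$ and the new $\deg p_{k,-k}$ equals $1$) and feeding $\mathcal{J}$ back into Proposition \ref{l2}, hence ultimately into the Osborn--Xu classification of Novikov and Gel'fand--Dorfman structures. Your symmetry identity $p_{-k,(n+1)k}(\partial+\lambda,\mu)=p_{-k,(n+1)k}(\partial+\mu,\lambda)$, obtained from the Jacobi identity for $L_{-k},L_{-k},L_{(n+1)k}$ using $[{L_{-k}}_\lambda L_{-k}]=0$ and the fact that $p_{-k,nk}$ is a nonzero constant, correctly kills the anomalous branch and locks in $a_{(n+1)k}=n+3$.

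The gap is in the final identification $J\cong CL_1(s)$. The phrase ``the structure constants fixed by the induction'' overstates what you have: the induction fixes the numbers $a_{nk}$ and $b_{nk}$, hence via Proposition \ref{p0} the degrees $\deg p_{mk,nk}=1$ and the shape of each $p_{mk,nk}$ only up to an undetermined scalar $c_{mk,nk}$; moreover you have not shown $p_{mk,nk}\neq 0$ for $m,n\geq 2$, which Lemma \ref{l1}(2) does not cover since it only concerns brackets with $L_{\pm k}$. Consequently the literal map $L_n\mapsto L_{nk}$, $L_{-1}\mapsto(\partial-s)L_{-k}$ has no reason to preserve brackets as written. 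To close this you must (i) prove all $p_{mk,nk}\neq 0$, e.g.\ by bracketing $[{L_{mk}}_\lambda[{L_{-k}}_\mu L_{(n+1)k}]]$ against the nonzero constant $p_{-k,(n+1)k}$, and (ii) show the family $\{c_{mk,nk}\}$ can be normalized to the $CL_1(s)$ values by rescaling the basis, which is the \cite[Lemma 5.9]{WCY}-type cocycle argument the paper invokes in Proposition \ref{bcp}; alternatively, observe that all brackets in $J$ have degree at most one, so $J$ is quadratic, and invoke Propositions \ref{OP} and \ref{XP} as in Case I of Proposition \ref{l2} --- which is what the paper's one-line appeal to Proposition \ref{l2} amounts to. Without one of these, the isomorphism is asserted rather than proved.
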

	\begin{proof}
		By Propositon 3.6, $p_{-k,-k}(\partial,\lambda)=0$. Then the Jacobbi-identity of $L_{-k}$,$L_{-k}$ and $L_{k}$ forces that $a_k=0$. In addition, one can also check that \[ \mathcal{J}=\oplus_{j\geq 0}\mathbb{C}[\partial]L_{jk}\oplus \mathbb{C}[\partial](\partial+b_{-k})L_{-k}\] is a proper ideal of  $\mathcal{L}[k]$.    By Proposition \ref{l2},  $\mathcal{J}$ is isomorphic to $CL_1(s)$. 
		
	\end{proof}
	
	\begin{p}{\label{3.7}} $Supp(\mathcal{L})$ is either $k\mathbb{Z}$ or $k\mathbb{Z}_{\geq  -1}$ for some integer $k$. \end{p}
	\begin{proof}
		Suppose that $Supp(\mathcal{L})\neq k\mathbb{Z}_{\geq  -1}$ for any $k$.  Let $t=min Supp(\mathcal{L})_1$. We may assume that $Re(a_{-t})\leq Re(a_t)$.  By Lemma \ref{l3.2} and Corollary 3.2,  $a_{jk}=a_{k}+(j-1)(a_{k}-2)$ for each  $j\geq 1$.

		Set $S:=Supp(\mathcal{L})\backslash t\mathbb{Z}$. Suppose that $S\neq \emptyset$ and  set $\mathcal{S}:=\bigoplus_{i\in S}\mathbb{C}[\partial]L_i$. Then one can check directly that 
		\[\mathcal{I}:=\mathcal{S}+ [\mathcal{S}_{(\lambda)}\mathcal{S}]\] is a non-zero ideal of $\mathcal{L}$, where 
		\[[\mathcal{S}_{(\lambda)}\mathcal{S}]:=\{a_{(n)}b|a,b\in \mathcal{S},\ n\in \mathbb{N}\}.\] Since  $\mathcal{L}$ is simple, $L_0\in \mathcal{I}$.  Thus there exists some $j\in S$ such that $j\in Supp_1(\mathcal{L})$.  Let $t_0=min Supp(\mathcal{L})_1\backslash 
		t\mathbb{N}$.  Since  $Re(a_t)>1$,  $p_{-t_0,t}(\partial,\lambda)\neq 0$. It implies that $t-t_0\in Supp(\mathcal{L})$.  On the other hand, by the choice of $t,t_0$,  we have $t_0-t\not\in Supp(\mathcal{L})_1$.  Let us consider the  Jacobi-identity of $L_{t_0},L_{-t}$ and $L_{t-t_0}$. We obtain that
		\begin{equation}{\label{3223}} p_{-t,t-t_0}(\partial+\lambda,\mu)p_{t_0,-t_0}(\partial,\lambda)=p_{t_0,t-t_0}(\partial+\mu,\lambda)p_{-t,t}(\partial,\mu).\end{equation}
		By Proposition {\ref{p3.1}}, we may analyze Equation (\ref{3223}) into the following three cases.

		\noindent {\bf Case I}:\ $p_{-t,t-t_0}(\partial,\mu)\neq 0$ and  $Re(a_{-t_0}) \leq Re(a_{t_0})$. Then Equation {\eqref{3223}} forces that either   $a_{-t}=1$ or $t\in I_0$. Thus $a_{jt}=2+j$ for each $j\geq 0$. On the other hand, since $Re(a_{t_0})>1$, Equation {\eqref{3223}} forces that $p_{t_0,-t_0}(\partial,\lambda)\in \mathbb{C}$. Thus $a_{jt_0}=j+2$ for each $j\geq 0$. It implies that $2+t_0=a_{t_0t}=2+t$, which is impossible.

		\noindent {\bf Case II}:\ $p_{-t,t-t_0}(\partial,\mu)\neq 0$ and   $Re(a_{-t_0}) \geq Re(a_{t_0})$. On the one hand, Equation {\eqref{3223}} forces that either   $a_{-t}=1$ or $t\in I_0$. In both cases, $a_{t_0t}=2+t_0>2$. On the other hand,
		Equation {\eqref{3223}} forces that either $a_{t_0}=1$ or $t_0\in I$. Since $Re(a_{-t_0}) \geq Re(a_{t_0})$, we can see that $a_{-t_0}\geq 3$. Hence $p_{-t_0,t_0t}(\partial,\lambda)\neq 0$ by Lemma 3.1(2) and 
		$\text{deg}\  p_{-t_0,t_0t}(\partial,\lambda)\leq 1$.  It implies that $a_{(t-1)t_0}>a_{tt_0}\geq 3$. Inductively, one can see that $a_{(t-i)t_0}>a_{(t+1-i)t_0}\geq 3$ for each $i<t$. Finally, one can see that $a_{t_0}>a_{tt_0}$, which is impossible.

		\noindent {\bf Case III}: If one of $p_{-t,t-t_0}(\partial,\lambda)$ and $p_{t_0,t-t_0}(\partial,\lambda)$ is zero, then $p_{-t,t-t_0}(\partial,\lambda)=p_{t_0,t-t_0}(\partial,\lambda)
		=0$.  Considering the  Jacobi-identity of $L_{t},L_{-t},L_{t_0}$,  we have
		\begin{equation}{\label{zzh}}p_{t,-t}(-\lambda-\mu,\lambda)(\partial+a_{t_0}\lambda+a_{t_0}\mu+b_{t_0})=p_{t,t_0-t}(\partial,\lambda)p_{-t,t_0}(\partial+\lambda,\mu).
		\end{equation}
		Thus $(\partial+a_{t_0}\lambda+a_{t_0}\mu+b_{t_0})|p_{t,t_0-t}(\partial,\lambda)$ or $(\partial+a_{t_0}\lambda+a_{t_0}\mu+b_{t_0})|p_{-t,t_0}(\partial+\lambda,\mu)$. The former case implies that $a_{t_0}=0$ while the latter case implies that $a_{t_0}=1$.  It implies that $a_{-t_0}\geq 3$. By using a similar proof as in {\bf Case II}, we can also get a contradiction. 
		
	\end{proof}
	
	\large{\emph{Proof of Theorem \ref{mt}.}}
	\begin{proof} By Proposition \ref{3.7},  we may assume that $1\in Supp(\mathcal{L})_1$.    By Proposition {\ref{l2}} and {\ref{bcp}},   we only need to consider the case that $\mathcal{L}\neq \mathcal{L}[1]$, that is $Supp(\mathcal{L})=\mathbb{Z}$ and
		$p_{-1,-1}(\partial,\lambda)=0$. Hence by Proposition 3.4 and Corollary 3.2, one can see that $a_{i}=i+2$ for each $i\geq 0$. 
		
		Next we shall prove that $i\in Supp_1(\mathcal{L})$ for each $i>0$. Suppose that $p_{-2,2}(\partial,\lambda)=0$. Then the  
		Jacobi-identity of $L_1$,$L_1$ and $L_{-2}$ implies that $p_{1,-2}(\partial,\lambda)\in \mathbb{C}$ and $1\in I_0$. Notice that $p_{1,-2}(\partial,\lambda)\neq 0$. Otherwise, $\bigoplus_{i\leq -2}\mathcal{L}_{i}$ will be a proper ideal of $\mathcal{L}$.  From the Jacobi-identity of 
		$L_1$, $L_{-2}$ and $L_{2}$, we have
		\[p_{1,-2}(-\mu-\lambda,\mu)p_{-1,2}(\partial,\lambda+\mu)=p_{1,2}(\partial+\mu,\lambda)p_{-2,3}(\partial,\lambda).\]
		Since $p_{1,-2}(\partial,\lambda)$, $p_{-1,2}(\partial,\lambda)\in \mathbb{C}^*$, it forces that $p_{1,2}(\partial,\lambda)\in \mathbb{C}^*$, which is a contradition. Hence $2\in Supp_{1}(\mathcal{L})$. 
		Suppose that there exists some $i>1$ such that $i\in Supp_1(\mathcal{L})$ but $1+i\not\in Supp_1(\mathcal{L})$. 			Let us consider the  Jacobi-identity of $L_{-1},L_{-i}$ and $L_{1+i}$. We obtain that
		\begin{equation}{\label{efnal}} p_{-i,1+i}(\partial+\lambda,\mu)p_{-1,1}(\partial,\lambda)=p_{-1,1+i}(\partial+\mu,\lambda)p_{-i,i}(\partial,\mu).\end{equation}
		Since $i>1$ and $a_i>3$, one can see that $i\in I_1\cup I_2$. Besides, Equation {\ref{efnal}} forces that $a_{-i}=1$ and $i\in I_1$. It implies that $a_i=4-a_{-i}=3$, which is a contradiction. Hence  $i\in Supp_1(\mathcal{L})$ for each $i>0$.
		
		Finally, we distinguish the following two cases.

		{\bf Case I}. $1\in I_1$. For each $i>2$, since $a_i>4$, we have $i\in I_1$. If
		$2\in I_1$, then $a_i=2+i$ for each $i\in \mathbb{Z}$.  Thus $\mathcal{L}$ is quadratic and is isomophic to $CL_2(1,b)$ for some $b\in \mathbb{C}$. It contradicts the simplicity of $\mathcal{L}$. Suppose that	$2\in I_2$. Then $a_{-2}=1$ and $a_i=2+i$ for $i\neq -2$.
		By using a similar proof as in (2) of Proposition \ref{l2}, one can see that $\mathcal{L}\cong SCL_2(1,b)$ for some  $b \in \mathbb{C}$.

		{\bf Case II}. $1 \in I_0$. Notice that $i\in I_1\cup I_2$ for each $i>0$.
		Then
		\[\text{deg}\  p_{-1-i,i}(\partial,\lambda)=a_{-1-i}+a_{i}-a_{-1}-1\geq 1\]
		for each $i>0$. Since $a_i>1$ for each $i>0$,  by (3.3), 
		\[ \mathcal{J}:=\oplus_{j\neq -1}\mathbb{C}[\partial]L_{j}\oplus \mathbb{C}[\partial](\partial+b_{-1})L_{-1}\] 
		is a non-zero proper ideal of $\mathcal{L}$, which is impossible.  
	\end{proof}

\end{document}